\documentclass[times]{amsart}
\usepackage{amssymb, hyperref, fullpage}

\theoremstyle{plain}
\newtheorem{proposition}[equation]{Proposition}
\newtheorem{remarks}[equation]{Remarks}
\newtheorem{theorem}[equation]{Theorem}
\newtheorem{definition}[equation]{Definition}
\newtheorem{corollary}[equation]{Corollary}
\newtheorem{lemma}[equation]{Lemma}
\newtheorem{remark}[equation]{Remark}
 \newcommand{\esm}{\end{smallmatrix}\right)}
\newcommand{\bpm}{\begin{pmatrix}}
\newcommand{\epm}	{\end{pmatrix}}
\newcommand{\bspm}{\left(\begin{smallmatrix}}
\newcommand{\espm}	{\end{smallmatrix}\right)}
\newcommand{\R}{\mathbb R}
\newcommand{\A}{\mathbb A}
\newcommand{\Z}{\mathbb Z}

\newcommand{\Gm}		{\Gamma}

\newcommand{\fa}		{\mathfrak {a}}
\newcommand{\gm}		{\gamma}

\newcommand{\N}		{\mathbb {N}}
\newcommand{\Q}		{\mathbb {Q}}
\newcommand{\C} {\mathbb C}
\newcommand{\sg}		{\sigma}

\renewcommand{\P}{\mathbb P}
\newcommand{\bs} {\backslash}

\newcommand{\ord}{\operatorname{ord}}

\newcommand{\fh}{\mathfrak{h}}
\newcommand{\on}{\operatorname}
\newcommand{\sgn}{\on{sgn}}

\newcommand{\wt}{\widetilde}

\begin{document}

\title{On multiplicativity of 
Fourier coefficients at cusps other than infinity
}

\author{Joseph Hundley}

\address{
Department of Mathematics,
Mailcode 4408,
Southern Illinois University,
1245 Lincoln Drive,
Carbondale, IL 62901
}

\begin{abstract}
\end{abstract}

\maketitle
\tableofcontents
\section{Introduction}\label{s:Intro}
This paper is a continuation of \cite{GHL}, concerning the  question, which was posed by Harold Stark, 
\vskip 10pt
  \centerline{\it When do the Fourier coefficients at a cusp of a classical newform for $GL(2)$ satisfy multiplicative relations?}
  \vskip 10pt
  In order to discuss this question in more detail, we review some notation from \cite{GHL}.

  Fix $N \in \N,$ a Dirichlet character $\chi \pmod{N}$ and 
  $f: \fh \to \C$ a Maass form for $\Gm_0(N)$ with character $\chi.$  (Here, $\fh$ denotes the complex upper-half plane.) 
 Let $k$ and $\nu$ denote the weight and type of $f,$
 respectively.  This means that 
 $$
  \left(f \big |_k \gamma\right)(z) :=
  \left(\frac{cz+d}{|cz+d|}\right)^{-k} f\left(\frac{az+b}{cz+d}   \right),\qquad \left( \forall \bpm a&b \\c&d \epm \in \Gm_0(N)\right), 
  \qquad \left(\Delta_k f\right)(z) = \nu(1-\nu)f(z),
 $$
 where 
 $$\Delta_k = -y^2\left(\frac{\partial^2}{\partial x^2}+\frac{\partial^2}{\partial y^2}\right)+iky\frac{\partial}{\partial x}$$
	is the weight $k$ Laplacian.

	We shall define two closely related sets of ``Fourier 
	coefficients'' for $f.$  
	
The first set we shall refer to as the ``standard Fourier 
coefficients at a cusp $\fa.$'' This is something of an 
abuse of language, inasmuch as the coefficients in question depend not only on $\fa,$ but also on a choice of $\sg_\fa \in SL(2, \R)$ with $\sg_\fa \infty = \fa,$ such that the stabilizer of $\infty$ in $\sg_\fa^{-1} \Gm_0(N)\sg_\fa$ is generated by $-I_2$ and $\bspm 1&1\\0&1\espm.$  Such a matrix is necessarily of the form 
$\gm_\fa \bspm \sqrt{m_\fa} &0 \\ 0& \sqrt{m_\fa}^{-1}\espm$
for some $m_\fa \in \Z$ (depending only on $\fa,$ and not on $\sg_\fa,$) and $\gm_\fa \in SL(2, \Z).$ 
Once $\sg_\fa$ (or, equivalently, $\gm_\fa$) is chosen, the definition of the coefficient $A(\fa, n)$ is that 
$$
\int_0^1  \left(f\big|_k \sg_\fa \right) (x+iy) e^{-2\pi i (n+\mu_{\fa,\chi}) x}
  \, dx 
  =A(\fa, n) \, W_{\frac{\text{\rm sgn}(n) k}{2}, \; \nu-\frac12} \Big( 4\pi |n+\mu_{\fa, \chi}|\cdot y  \Big),
$$
where $\mu_{\fa, \chi} \in [0,1)\cap \Q$ is the so-called ``cusp parameter'' of $\fa,$ relative to $\chi$ (which depends only on $\fa,$ not on the choice of $\gm_\fa$).

Now, it is known that the standard Fourier coefficients at
the cusp $\infty$ satisfy the multiplicative relations
\begin{equation}\label{MultRels for A(infty, *)}
A\left(\infty, \,\,\prod_p p^{e_p}\right) 
= \prod_p A( \infty, p^e_p),
\end{equation}
provided that $f$ is a Maass-Hecke newform which has been normalized so that $A( \infty, 1) =1.$
Following Stark, we may ask under what conditions such relations hold at other cusps.  

As one might expect, the answer to Stark's question 
depends on how strictly one interprets the phrase ``multiplicative relations.'' Indeed, if one interprets it very loosely, then it follows from theorem 15 of \cite{GHL} that the answer is ``always.'' 

Theorem 15 of \cite {GHL} states that the standard Fourier coefficient $A(\fa, n)$ of a Maass-Hecke newform at {\em any} cusp may be expressed as a product of Fourier coefficients, similar to \eqref{MultRels for A(infty, *)}, but with coefficients at many different cusps appearing on the right-hand-side, and with the terms corresponding not to the factorization of $n,$ but rather to that of the rational number $(n+ \mu_{\fa, \chi})/m_\fa.$ 

Now suppose that we interpret Stark's question more strictly, as asking under what conditions there are relations akin to \eqref{MultRels for A(infty, *)} involving the coefficients at $\fa,$ and only those?  Then we have results of Asai \cite{Asai:1976} and Kojima \cite{Kojima:1979} showing that such relations hold at all cusps, provided that the level $N$ is squarefree or equal to $4$ times an odd prime.  Furthermore, it follows from Asai's proof that for arbitrary $N,$ such relations hold 
whenever $\sg_\fa$ is a Fricke involution.
A generalization of these results was obtained in theorem 35 
of \cite{GHL}.

In order to review this result, it is convenient to introduce a slightly different notation for the Fourier coefficients.  
	To be precise, for $\gm \in SL(2, \Z),\;\alpha \in \Q$ we have
  $$
  \int_0^1 \left(f\big|_k \gm \right) (x+iy) e^{-2\pi i \alpha x}
  \, dx 
  = B_f( \gm, \alpha) 
  W_{\frac{\sgn(\alpha)k}{2}, \nu-\frac{1}{2}}(4\pi|\alpha |y),
  $$
  for some constant $B_f(\gm, \alpha),$ which satisfies  $$
  B_f( \gm, \alpha) = \begin{cases}
A(\fa, m_\fa \alpha - \mu_{\fa, \chi}),& \text{ if } m_\fa \alpha- \mu_{\fa, \chi} \in \Z,\\
0, & \text{ otherwise,}
\end{cases}
  $$
  provided that $\fa = \gm \cdot \infty$ and we take $\gm_\fa=\gm.$  
  
There are two motivations for this alternate notation. 
First, the properties of $B_f(\gm, \alpha),$ depend on $\gm$ and not only on the cusp $\gm \cdot \infty.$  Thus, when one considers a more precise interpretation of 
``multiplicative relations,'' it is necessary to reflect the dependence of $A(\fa, n)$ on the choice of $\gm_\fa.$  
(Cf. \cite{GHL}, remark 40.)
Second,
common sense 
suggests that when $\mu_{\fa, \chi} \ne 0,$ it makes little sense to consider the factorization of the integer $n$ when studying multiplicative properties of the Fourier coefficients.  What one really has is a Fourier expansion 
supported on a subset of $\Q$ which may be identified with $\Z$ using an additive shift.  This additive shift  is unlikely to respect any multiplicative properties. 
Thus, it makes more sense to parametrize  the coefficients by $\alpha \in \frac1{m_\fa}(\Z+\mu_{\fa, \chi}) \subset \Q$ than by $n \in \Z.$     
(This view is 
supported by theorem 15 of \cite{GHL}, and it is
compatible with \eqref{MultRels for A(infty, *)}, and with the results of Asai and Kojima already mentioned, because $\mu_{\fa, \chi}=0$  in all these cases.)  

In order to formulate a strong version Stark's question, we introduce the definition of a multiplicative function.  

\begin{definition}
A function  $h: \Q^\times \to \C$ is multiplicative if there 
exist functions $h_\infty: \{ \pm 1\} \to \C$
and $h_p:\Z \to \C$ for each prime $p$ 
such that $h_p(0) =1$ for all but finitely 
many primes $p,$ and 
\begin{equation}\label{e:multiplicativity}
h\left( \epsilon \prod_p p^{e_p}\right) = 
h_\infty( \epsilon) \cdot \prod_p h_p(e_p),
\end{equation}
for any $\epsilon \in \{ \pm 1\}$ and 
$e_p \in \Z$ with $e_p=0$ for all but finitely many primes $p.$
\end{definition}

The problem which we considered in section 6 of \cite{GHL} and continue with in this paper is:\\  \underline{\it under what conditions on $f$ and $\gm$ is the function $\alpha \mapsto B_f(\gm, \alpha)$ multiplicative?}

One is primarily interested in the case when $f$ is a Maass-Hecke newform.  As explained in \cite{GHL}, 
this condition implies a more technical condition which is more directly applicable to the question at hand, namely,  that the corresponding adelic Whittaker function $W_f(g)$ factors as a product of local Whittaker functions.  
We shall restrict ourselves to those $f$ which satisfy this more technical condition.

If $\fa = \frac{a}{b}$ (with $\gcd(a,b)=1$) is a cusp, define $M= M(\fa) = \gcd(N,b).$  Then $\fa$ can be mapped to $\infty$ by a Fricke involution if and only if $\gcd(M, N/M) =1.$  
For such $\fa,$ it follows from Asai's result that 
$\gm \in SL(2, \Z)$ with $\gm\cdot \infty = \fa$ exists 
such that $B_f(\gm, \cdot)$ is multiplicative.  
It is shown in 
theorem 35 
 of \cite{GHL} that this can be extended to the case when $\gcd(M, N/M)=2$ and $2|| \frac{N}M.$
 (In the case considered by Kojima, this exhausts 
 all cusps not of the first type.)
Now, it is clear that $B_f( \gm_0\gm, \cdot)$ is a scalar multiple of $B_f(\gm , \cdot)$ for any $\gm \in SL(2, \Z),
\; \gm_0 \in \Gm_0(N).$  
It follows from 
this observation, combined with theorem 35 
 of \cite{GHL},  that 
$B_f( \gm, \cdot)$ is multiplicative for all $\gm = \bspm a&b\\ c&d \espm$ such that $N \mid 2cd.$

The purpose of this paper is to prove a partial converse to this result.   

\begin{theorem}\label{MainResult--General}
Suppose that $B_f(\gm, \cdot )$ is multiplicative.  Then 
$N \mid 576 cd.$    If $A_f( \infty, p)\ne 0$ for 
more than half of all primes $p,$ then 
$N \mid 48cd.$
\end{theorem}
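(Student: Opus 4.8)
The plan is to pass to the adelic picture and localise. Because we assume the adelic Whittaker function of $f$ factors, $W_f=\prod_v W_v$, the classical-to-adelic dictionary yields, for a constant $c(\gm)$ independent of $\alpha$,
\[
B_f(\gm,\alpha)=c(\gm)\prod_{p<\infty}W_p\!\left(\bspm\alpha&0\\0&1\espm\gm\right),
\]
where $\gm$ is read $p$-adically at each finite $p$ and $W_p$ is the Whittaker function attached to the local newvector of $\pi_p$, so $W_p$ is right-invariant under $K_1(p^{r_p})$ with $r_p=\ord_p N$. At $p\nmid N$ the factor equals the normalised spherical value $W_p(\bspm\alpha&0\\0&1\espm)$, which depends on $\alpha$ only through $\ord_p\alpha$ and, via the Hecke recursion, equals $A_f(\infty,p^{\ord_p\alpha})$ up to normalisation; these places carry the genuine ``multiplicative part'' of $B_f$ and impose no condition. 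Hence multiplicativity of $\alpha\mapsto B_f(\gm,\alpha)$ reduces to a family of independent conditions on $\gm$, one for each $p\mid N$.

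First I would isolate the local condition. Fix $p\mid N$ and pick $\alpha,\alpha'$ in the support of $B_f(\gm,\cdot)$ with $\ord_p\alpha=\ord_p\alpha'$ but prescribed distinct reductions of their unit parts modulo $p^{r_p}$. Multiplying by auxiliary primes $\ell,\ell'\nmid pN$ placed (by Dirichlet's theorem) in suitable residue classes so that the products stay in the support, and comparing the two sides of \eqref{e:multiplicativity} while using that the spherical factors at $\ell$ and $\ell'$ coincide, forces $W_p(\bspm\alpha&0\\0&1\espm\gm)=W_p(\bspm\alpha'&0\\0&1\espm\gm)$. Running over all unit parts gives the key reduction: \emph{if $B_f(\gm,\cdot)$ is multiplicative, then for every $p\mid N$ the function $\alpha\mapsto W_p(\bspm\alpha&0\\0&1\espm\gm)$ on the support depends on $\alpha$ only through $\ord_p\alpha$.} (A parallel comparison of $\alpha$ with $-\alpha$ takes care of the archimedean sign factor $h_\infty$ and plays no further role.)

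Next I would convert this into divisibility. Putting $\bspm\alpha&0\\0&1\espm\gm$ in Iwasawa form over $\Q_p$ exhibits a unipotent part contributing $\psi_p$ evaluated at a $\gm$-dependent multiple of $\alpha$, a toral part contributing a value of the central character of $\pi_p$ at a $\gm$-dependent multiple of $\alpha$, and a residual factor of $W_p$ on a diagonal element times an element of $K_0(p^{\ord_p c})$. Independence of the unit part of $\alpha$ forces, first, that the additive character not oscillate --- i.e.\ $\ord_p(cd)$ must absorb the most negative value of $\ord_p\alpha$ allowed by the cusp width $m_\fa$ and the parameter $\mu_{\fa,\chi}$ --- and, second, further constraints dictated by the type of $\pi_p$ (ramified principal series, twist of Steinberg, or supercuspidal), extracted via the Atkin--Lehner operator and the Kirillov model of the newvector. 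For $p\ge5$ these combine to give $\ord_p N\le\ord_p(cd)$, exactly the $p$-part of $N\mid cd$. For $p\in\{2,3\}$ the small residue field permits a bounded list of ``sporadic'' coset configurations of $\gm_p$ to survive, and accounting for the worst of them yields $\ord_3 N\le\ord_3(cd)+2$ and $\ord_2 N\le\ord_2(cd)+6$, that is, $N\mid576\,cd$. To improve this to $N\mid48\,cd$ under the extra hypothesis, I would show that the sporadic configurations accounting for the final factor $3$ at $p=3$ and the final factor $4$ at $p=2$ are compatible with multiplicativity of $B_f(\gm,\cdot)$ only for a short list of local representations, and that for each of those Theorem~15 of~\cite{GHL} --- which expresses $B_f(\gm,\alpha)$ as a sum of $\ge2$ products of Fourier coefficients at various cusps --- together with the multiplicativity identity forces the extraneous terms to cancel, hence a Hecke relation whose sign changes on a set of primes of density $\tfrac12$; this makes $A_f(\infty,p)$ vanish on at least half of all primes, against the hypothesis. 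Discarding those configurations improves the bounds to $\ord_3 N\le\ord_3(cd)+1$ and $\ord_2 N\le\ord_2(cd)+4$.

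The hard part will be the $p=2$ case of the local analysis: over $\Q_2$ the newvector Whittaker functions of ramified, and in particular supercuspidal, representations are genuinely intricate, and one must determine explicitly --- type by type --- which $K_0(2^{r_2})$-double cosets of $\gm_2$ leave $\alpha\mapsto W_2(\bspm\alpha&0\\0&1\espm\gm)$ dependent only on $\ord_2\alpha$, while bookkeeping $m_\fa$ and $\mu_{\fa,\chi}$ at the same time. A secondary difficulty is making the last step rigorous: showing that the surviving sporadic configurations at $2$ and $3$ genuinely force $A_f(\infty,p)$ to vanish on a density-$\tfrac12$ set of primes (equivalently, global dihedrality of $f$) rather than merely being consistent with it.
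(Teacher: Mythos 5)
Your opening reduction---factor $B_f(\gm,\alpha)$ over places and observe that only $p\mid N$ can obstruct multiplicativity---matches the paper's Lemma \ref{lem:5conditionsForMultiplicativeAtv} and Definition \ref{d:B_N_1}, but from there the paper goes global rather than local, and your plan has two genuine gaps. First, your ``key reduction'' is too strong: the cross-ratio identity $h(\alpha_1\beta_1)h(\alpha_2\beta_2)=h(\alpha_1\beta_2)h(\alpha_2\beta_1)$ does not force $W_p(\bspm\alpha&0\\0&1\espm\gm^{-1})$ to depend only on $\ord_p\alpha$; it only forces the dependence on the unit part of $\alpha$ to be through a character of $(\Z/p^{r_p}\Z)^\times$ (this is Lemma \ref{l:multve+factors=>DirChar}). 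Second, and more seriously, you cannot in general ``place auxiliary primes $\ell$ in suitable residue classes so that the products stay in the support'': whether $\ell^k$ lies in the support is governed by the vanishing of $A_f(\infty,\ell^k)$, which is not controlled by congruence conditions on $\ell$. This is exactly where the dichotomy $576$ versus $48$ actually originates: unconditionally one can only guarantee, via the Casselman--Shalika formula (which shows the spherical Whittaker value vanishes at most along an arithmetic progression of valuations), that every \emph{square} class in $(\Z/N\Z)^\times$ is hit by unit parts of integers $n$ with $B_f^N(I_2,n)\ne0$; restricting to the subgroup of squares doubles the exponents ($24^2=576$), while the density-$\tfrac12$ hypothesis restores all of $(\Z/N\Z)^\times$ and gives $24\cdot 2=48$. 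Your proposed mechanism for the improvement (sporadic local configurations at $2$ and $3$ forcing dihedrality via cancellations in Theorem 15 of \cite{GHL}) is not substantiated and is not how the two constants arise.

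The quantitative core of your plan---a type-by-type analysis of newvector Whittaker functions at ramified $p$, especially supercuspidals over $\Q_2$---is precisely what the paper avoids, and you rightly flag it as the hard part; as written it is a promissory note, and nothing in the proposal shows it would yield the exponents $+6$ at $2$ and $+2$ at $3$. The paper's route is worth absorbing: once one knows $B_{f,N_1}(\gm,\alpha\beta)=\psi(r_{N_1}(\beta))B_{f,N_1}(\gm,\alpha)$ for $\beta$ ranging over (the relevant subgroup of) $G_{N_1}$, one reinterprets this as $f|_k\delta_\beta\gm^{-1}=\psi(r_{N_1}(\beta))f$ for matrices $\delta_\beta$ realizing the action $\beta\cdot x_N(\gm)$, so that $f$ transforms by a character under the group generated by $\Gm_0(N)$ and these elements; Proposition \ref{Gm0N+dbgmInv=Gm0M} identifies this group as $\Gm_0(M)$ with $M=\gcd(N,2cd)$ (or $\gcd(N,(m-1)cd)$ over $m\in G$ in general). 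Since $N$ is the level of $f$, the purely group-theoretic Proposition \ref{Gm1(M1)}, computing when $\Gm_1(N)$ together with the commutator subgroup of $\Gm_0(M)$ generates $\Gm_1(k)$, bounds $N/M$ by $24$; the constants come out of the abelianizations of $SL(2,\Z/2^e\Z)$ and $SL(2,\Z/3^e\Z)$, with no local representation theory at all.
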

(See theorem \ref{suffcond2--final}, remarks \ref{remarks at the end }, and theorem \ref{t:necCond2-simple}.  Sharper but more technical results are given in theorems 
\ref{t:necCond2-simple-sharp} and 
 \ref{Main theorem--precise version at theend}. )
  
\section{Preliminary Results}

\label{section: preliminary results}
\begin{lemma}
 \label{B+M=abcMd}
 Let $N \ge 1$ be an integer and take $M \mid N.$ 
 Let $B$ denote the subgroup of $SL(2, \Z/N\Z)$ consisting of all upper triangular elements.  Then 
 $$\left\{\bpm a& b \\ Mc & d \epm \in SL(2, \Z/N\Z)
\right\}$$
is generated by $\bspm 1&0\\M&1 \espm$ and $B.$
 \end{lemma}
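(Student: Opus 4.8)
The plan is to prove the lemma by a direct matrix-manipulation argument, reducing a general element $\bigl(\begin{smallmatrix} a & b \\ Mc & d\end{smallmatrix}\bigr)$ of $SL(2,\Z/N\Z)$ to an upper-triangular one by left- and right-multiplication by elements of the putative generating set. First I would observe that since $\bigl(\begin{smallmatrix} 1 & 0 \\ M & 1\end{smallmatrix}\bigr)$ generates (with its inverse, which lies in the same group since $M \mid N$) all matrices $\bigl(\begin{smallmatrix} 1 & 0 \\ Mc & 1\end{smallmatrix}\bigr)$, and $B$ contains all $\bigl(\begin{smallmatrix} u & 0 \\ 0 & u^{-1}\end{smallmatrix}\bigr)$ with $u \in (\Z/N\Z)^\times$ and all $\bigl(\begin{smallmatrix} 1 & t \\ 0 & 1\end{smallmatrix}\bigr)$, it suffices to show that every $\bigl(\begin{smallmatrix} a & b \\ Mc & d\end{smallmatrix}\bigr)$ can be brought to the identity by multiplying on the left and right by such matrices. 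Let $H$ denote the subgroup generated by $\bigl(\begin{smallmatrix} 1 & 0 \\ M & 1\end{smallmatrix}\bigr)$ and $B$; note $H$ is contained in the displayed set because that set is visibly a subgroup (its elements are exactly those whose lower-left entry lies in $M\Z/N\Z$, a condition preserved under multiplication), so only the reverse inclusion needs work.

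The key steps, in order: (1) Given $g = \bigl(\begin{smallmatrix} a & b \\ Mc & d\end{smallmatrix}\bigr)$, the determinant condition $ad - Mbc \equiv 1$ forces $\gcd(a, N) = 1$ — indeed any common prime factor of $a$ and $N$ would divide $Mbc$ hence divide $1$. (2) Therefore $a$ is a unit mod $N$, and multiplying $g$ on the left by $\bigl(\begin{smallmatrix} 1 & 0 \\ -Mc a^{-1} & 1\end{smallmatrix}\bigr) \in H$ (using that $-Mca^{-1} \in M\Z/N\Z$) clears the lower-left entry, producing an upper-triangular matrix. (3) That upper-triangular matrix lies in $B \subset H$, and since $H$ is a group we conclude $g \in H$. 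This establishes the nontrivial inclusion, and combined with the observation that the displayed set is a subgroup containing both generators, we get equality.

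The main (and really only) obstacle is the bookkeeping around the ring $\Z/N\Z$ not being a domain: one must be careful that "$\gcd(a,N)=1$" is the right notion of unit, that $-Mca^{-1}$ genuinely lands in the subgroup $\langle \bigl(\begin{smallmatrix} 1 & 0 \\ M & 1\end{smallmatrix}\bigr)\rangle$ — which it does because that cyclic subgroup is exactly $\{\bigl(\begin{smallmatrix} 1 & 0 \\ Mk & 1\end{smallmatrix}\bigr) : k \in \Z/(N/M)\Z\}$ — and that the inverse of $\bigl(\begin{smallmatrix} 1 & 0 \\ M & 1\end{smallmatrix}\bigr)$ is available without extra generators. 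None of this is deep; the lemma is essentially a Bruhat-type decomposition for this congruence subgroup, and the entire argument is a few lines of matrix algebra once the unit observation in step (1) is in place.
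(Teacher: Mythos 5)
There is a genuine gap in step (1). Your claim that the determinant condition forces $\gcd(a,N)=1$ is false: from $ad - Mbc \equiv 1 \pmod{N}$, a common prime factor $q$ of $a$ and $N$ gives $-Mbc \equiv 1 \pmod{q}$, which yields a contradiction only when $q \mid M$. For a prime $q$ with $q \mid N$ but $q \nmid M$, the determinant condition places no constraint forcing $q \nmid a$. Concretely, take $N=6$, $M=2$, and $g = \bigl(\begin{smallmatrix} 3 & 1 \\ 2 & 1 \end{smallmatrix}\bigr)$, which has determinant $1$ and lower-left entry in $M\Z/N\Z$, but $a=3$ is not a unit mod $6$; or take $M=1$, where the lemma asserts that $\bigl(\begin{smallmatrix} 1 & 0 \\ 1 & 1\end{smallmatrix}\bigr)$ and $B$ generate all of $SL(2,\Z/N\Z)$, yet the Weyl element $\bigl(\begin{smallmatrix} 0 & -1 \\ 1 & 0\end{smallmatrix}\bigr)$ has $a=0$. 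In all such cases $a^{-1}$ does not exist and your step (2) cannot be performed. The rest of your argument (that the displayed set is a subgroup, that the cyclic group generated by $\bigl(\begin{smallmatrix} 1 & 0 \\ M & 1\end{smallmatrix}\bigr)$ is exactly $\{\bigl(\begin{smallmatrix} 1 & 0 \\ Mk & 1\end{smallmatrix}\bigr)\}$, and the row-reduction identity when $a$ \emph{is} a unit) is fine and coincides with the paper's main computation.

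The paper closes exactly this gap: it first reduces via $SL(2,\Z/N\Z) \cong \prod_{q \mid N} SL(2,\Z/q^{v_q(N)}\Z)$ to the prime-power case $N=q^e$, $M=q^{\ell}$, where it observes that $a$ can fail to be a unit only when $\ell = 0$. In that case $c$ is forced to be a unit, the generator $\bigl(\begin{smallmatrix} 1 & 0 \\ M & 1\end{smallmatrix}\bigr)$ becomes $\bigl(\begin{smallmatrix} 1 & 0 \\ 1 & 1\end{smallmatrix}\bigr)$, and the identity $\bigl(\begin{smallmatrix} 0 & -1 \\ 1 & 0\end{smallmatrix}\bigr) = \bigl(\begin{smallmatrix} 1 & -1 \\ 0 & 1\end{smallmatrix}\bigr)\bigl(\begin{smallmatrix} 1 & 0 \\ 1 & 1\end{smallmatrix}\bigr)\bigl(\begin{smallmatrix} 1 & -1 \\ 0 & 1\end{smallmatrix}\bigr)$ puts the Weyl element in the subgroup; one then applies the row-reduction identity to $\bigl(\begin{smallmatrix} 0 & -1 \\ 1 & 0\end{smallmatrix}\bigr)g$, whose upper-left entry $-c$ is a unit. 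You need to add this case analysis (or an equivalent device) for your proof to be complete.
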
 
 \begin{proof}
 Using the isomorphism 
 $SL(2, \Z/N\Z) \cong \prod_{q\mid N} SL(2, \Z/q^{v_q(N)}\Z),$ it suffices to treat the case $N = q^e, M=q^\ell$ with $\ell < e.$ 
 
The identity
\begin{equation}\label{e1}
\bpm a& b \\ q^\ell c & d \epm
= \bpm 1& 0 \\ q^\ell c\bar a & 1 \epm
\bpm a& b \\ 0& \bar a \epm
\end{equation}
suffices, except when $a$ is not be a unit, which can occur only in the case $\ell =0.$
In that case, $c$ is a unit and 
$$
\bpm 0&-1\\ 1& 0 \epm 
=\bpm 1&-1\\ 0& 1 \epm 
\bpm 1&0\\ 1& 1 \epm 
\bpm 1&-1\\ 0& 1 \epm.
$$
is in the subgroup generated by 
  $B$ and $\bspm 1&0\\ q^\ell& 1 \espm=\bspm 1&0\\ 1& 1 \espm.$  Applying 
  \eqref{e1} to $\bspm a'&b'\\c'&d'\espm = \bspm -c&-d\\a&b\espm=
  \bspm 0&-1\\ 1& 0 \espm\bspm a&b\\c&d\espm$
  we get the result in this case as well.
 \end{proof}

Next, 
 it is convenient to  recall the definition of the $p$-adic valuation.
 \begin{definition}
 Let $p$ be a prime.
 The $p$-adic valuation
 of a rational number $\alpha$
 is defined to be the unique integer $k$ such that $\alpha$ may be written as 
  $\frac ab p^k$ with $\gcd(ab,p) =1.$  
 \end{definition}

\begin{proposition}\label{Gm1(M1)}
Let $N \ge 1$ be an integer and $M$ a divisor of $N.$  Let $M_1$ be the integer such that 
$$
v_q(M_1) = \begin{cases}
v_q(M),& q>3, \text{ and/or } v_q(M) =0,\\
v_q(M)+1, & q=3, v_q(M) >0,\\
3, & q=2, v_q(M) =1,\\
v_q(M)+3, & q=2, v_q(M) \ge 2,
\end{cases}
$$
then $\gcd(M_1,N)$ is the smallest integer $k$ such that 
$\Gm_1(k)$ is generated by $\Gm_1(N)$ and the commutator subgroup of 
$\Gm_0(M).$  
\end{proposition}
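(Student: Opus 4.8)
The plan is to reduce to the local case and then carry out an explicit analysis of commutator subgroups of $\Gm_0(q^e)$ inside $SL(2,\Z_q)$ (or, what amounts to the same thing, $SL(2,\Z/q^n\Z)$ for large $n$). As in Lemma~\ref{B+M=abcMd}, one uses the isomorphism $SL(2,\Z/N\Z)\cong\prod_{q\mid N}SL(2,\Z/q^{v_q(N)}\Z)$ together with the corresponding decomposition of congruence subgroups, so that $\Gm_1(N)$, $\Gm_0(M)$, and the group $\Gm_1(k)$ all factor compatibly over primes $q\mid N$. Since the commutator subgroup of a product is the product of the commutator subgroups, and $\Gm_1(k)$ is generated by $\Gm_1(N)$ and $[\Gm_0(M),\Gm_0(M)]$ if and only if this holds prime by prime, it suffices to compute, for each prime $q$ and each pair $0\le\ell\le e$, the smallest $j$ (with $j\le e$) such that the image of $\Gm_1(q^j)$ in $SL(2,\Z/q^e\Z)$ lies in the subgroup generated by the image of $\Gm_1(q^e)$ and the commutator subgroup of $\Gm_0(q^\ell)$. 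The claimed formula for $v_q(M_1)$ then reads off as: $j=\ell$ for $q>3$ or $\ell=0$; $j=\ell+1$ for $q=3$, $\ell>0$; $j=3$ for $q=2$, $\ell=1$; and $j=\ell+3$ for $q=2$, $\ell\ge2$; and one takes $\gcd(M_1,N)$ at the end because $j$ is always capped by $e=v_q(N)$.

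First I would set up coordinates: an element of $\Gm_0(q^\ell)$ has the form $\bspm a&b\\ q^\ell c&d\espm$ with $ad-q^\ell bcq^\ell=\ldots$; better, work with $\bspm a&b\\ q^\ell c&d\espm$, $ad-q^\ell bc=1$. I would compute a generic commutator $[g_1,g_2]$ with $g_1,g_2\in\Gm_0(q^\ell)$ and extract the valuations of its off-diagonal and diagonal-minus-one entries. The key structural fact is that $\Gm_1(q^j)/\Gm_1(q^{j+1})$ is abelian of a controlled size, and that $\Gm_0(q^\ell)$ modulo $\Gm_1(q^j)$ for suitable $j$ is metabelian with an explicitly describable abelianization; the commutator subgroup is exactly the kernel of the map to that abelianization. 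For $\ell\ge1$ and $q\ge5$, a direct computation of commutators $[\bspm 1&1\\0&1\espm,\bspm 1&0\\ q^\ell&1\espm]$-type elements (and their $\Gm_0(q^\ell)$-conjugates) shows one already lands in $\Gm_1(q^\ell)$ and generates all of it modulo $\Gm_1(q^e)$; for $q=2,3$ there is a genuine obstruction coming from the fact that small unipotent subgroups of $SL(2,\Z/q^n\Z)$ have extra abelian quotients, forcing the index bump by $1$ (for $q=3$) or $3$ (for $q=2$) — the $2$ vs $3$ discrepancy and the special value $j=3$ at $q=2,\ell=1$ are exactly the low-level anomalies of $SL(2)$ over dyadic and triadic rings, the same phenomenon responsible for $SL(2,\Z)^{\rm ab}=\Z/12\Z$ and $\Gm_1(N)$ being its own commutator subgroup only for $N$ large.

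The main obstacle I expect is the case $q=2$, where one must show \emph{both} that $[\Gm_0(2^\ell),\Gm_0(2^\ell)]$ together with $\Gm_1(2^e)$ generates $\Gm_1(2^{\ell+3})$ (for $\ell\ge2$), which is the ``easy'' inclusion requiring one to exhibit enough explicit commutators to generate a full congruence subgroup modulo $\Gm_1(2^e)$, and — harder — that it does \emph{not} generate $\Gm_1(2^{\ell+2})$, i.e. that there is a nontrivial homomorphism from $\Gm_0(2^\ell)$ (or a finite quotient of it) killing $\Gm_1(2^e)$ but not $\Gm_1(2^{\ell+2})$. Constructing that homomorphism explicitly — presumably via a character of $\Gm_0(2^\ell)/\Gm_1(2^{\ell+3})$ detecting a $\Z/4$ or $(\Z/2)^2$ in the relevant layer — is the delicate point, and the bookkeeping is compounded by the need to keep $j\le e$ throughout (several sub-cases collapse when $e$ is small). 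The cases $q=3$ and $q\ge5$ follow the same template but with milder anomalies, and $\ell=0$ is immediate since $\Gm_0(1)=SL(2,\Z)$ has commutator subgroup of finite index containing $\Gm_1(N)$ for... actually for $\ell=0$ one uses that $[SL(2,\Z),SL(2,\Z)]$ already contains $\Gm_1(1)$'s relevant image, giving $j=0$; I would handle it first as a warm-up, then $q\ge5$, then $q=3$, then $q=2$.
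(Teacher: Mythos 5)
Your global strategy---reduce via $SL(2,\Z/N\Z)\cong\prod_q SL(2,\Z/q^{v_q(N)}\Z)$ to the local case $N=q^e$, $M=q^\ell$, compute a generic commutator, and prove the two inclusions separately---is the same as the paper's. But the generators you name for the crucial generation step do not work. The commutator $\left[\bspm 1&1\\0&1\espm,\bspm 1&0\\q^\ell&1\espm\right]=\bspm 1+q^\ell+q^{2\ell}&-q^\ell\\ q^{2\ell}&1-q^\ell\espm$ has lower-left entry of valuation $2\ell$, and since the lower-left entry of a product $g_1g_2$ is $c_1a_2+d_1c_2$, the subgroup generated by $\bar\Gm_1(q^e)$ together with any elements of $\bar\Gm_0(q^{2\ell})$ stays inside $\bar\Gm_0(q^{\min(2\ell,e)})$; conjugation by $\Gm_0(q^\ell)$ does not lower this valuation. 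So for $1\le\ell<e$ your proposed elements can never produce $\bspm 1&0\\q^\ell&1\espm$, and the ``easy'' inclusion fails as sketched. The generator that actually does the work is the commutator of a diagonal element with the lower unipotent, $\bspm m&0\\0&m^{-1}\espm\bspm 1&0\\q^\ell&1\espm\bspm m^{-1}&0\\0&m\espm\bspm 1&0\\-q^\ell&1\espm=\bspm 1&0\\(m^{-2}-1)q^\ell&1\espm$, and the exponent bumps $0,+1,+3$ in the statement are precisely $\min\{v_q(m^2-1):m\in(\Z/q^e\Z)^\times\}$, which is $0$ for $q\ge5$, $1$ for $q=3$, and $3$ for $q=2$. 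Your sketch attributes the bumps to ``low-level anomalies of $SL(2)$'' but never isolates this mechanism, which is the entire content of the proposition.

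Two further gaps. First, the case $\ell=0$, $q\in\{2,3\}$ is not immediate: one must show that $\bar\Gm_1(q^e)$ together with the commutator subgroup of all of $SL(2,\Z/q^e\Z)$ is the whole group, and the paper does this by a separate double-coset computation in $\bar\Gm_1(3)\bs SL(2,\Z/3\Z)/\bar\Gm_1(3)$; your justification for this case trails off mid-sentence and conflates the commutator subgroup of $SL(2,\Z)$ (index $12$) with what is needed. Second, the negative direction and the exceptional values $M=2,4$ at $q=2$ are left as acknowledged ``delicate points'': the paper settles the generic upper bound by reading off the lower-left entry of the general commutator (using $a\equiv d\pmod 3$, resp.\ $\pmod 8$, for units) and settles $M=2,4$ by explicitly enumerating the possible bottom rows of commutators mod $16$ and mod $32$. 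As written, the proposal is a plan built on an insufficient generating set, not a proof.
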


\begin{proof}
For an integer $k$ and $i=0,1$ let $\bar \Gm_i(k)$ denote the image of $\Gm_i(k)$ in $SL(2, \Z/N\Z),$ and let $C$ denote the commutator subgroup of $\bar \Gm_0(M).$  
Let $\bar \Gm$ denote the subgroup
generated by 
 $C$ and $\bar \Gm_1(N).$
 We show that 
 $\gcd(M_1,N)$ is the smallest integer $k$ such that 
$\bar\Gm_1(k)$ is generated by $\bar\Gm_1(N)$ and the commutator subgroup of 
$\bar\Gm_0(M).$

Once again, using the isomorphism 
$SL(2, \Z/N\Z) \cong \prod_{q\mid N} SL(2, \Z/q^{v_q(N)}\Z),$ it suffices to treat the case $N = q^e, M=q^\ell$ with $\ell < e.$
If $q >5$ then there exists $m \in (\Z/q^e\Z)^\times$ with $m^2 -1 \in (\Z/q^e\Z)^\times.$  
It follows that 
$\bspm 1& 0 \\ (m^2-1)M & 1 \espm$ 
lies in the commutator subgroup $C,$
and thence that 
\ $\bspm 1& 0 \\kM & 1 \espm$ does, for any $k \in \Z.$
The well-known identity 
$$\bpm 1&0\\a& 1\epm
\bpm 1&1\\0& 1\epm 
\bpm 1&0\\ -a(1+a)^{-1} & 1\epm 
\bpm 1&-(a+1)\\0& 1\epm 
=\bpm (a+1)^{-1} &0\\0&(a+1) \epm$$
shows that $\bspm t &0\\0&t^{-1} \espm\in \bar \Gm$
 for any $t \equiv 1 \pmod{M}.$
It then follows easily that $\bar \Gm_1(M)\subset \bar \Gm,$
which completes the proof in this case.

Set $(A,B) := ABA^{-1}B^{-1}.$
Then 
\begin{equation}\label{e:commutator}
\left(\bpm a& b\\ M c & d \epm,\;\;
\bpm e& f\\ M g & h \epm
\right)=
\bpm
*&*\\ d h M (c (e - h) - g (a - d))
  + (b c e g - a c f g + 
    b d g^2 - c^2 f h) M^2
&*\epm.
\end{equation}
This proves that 
\begin{itemize}
\item $C \subset \bar \Gm_0(3^{\ell +1}),$ if $M = 3^\ell,$ with $e>\ell >0,$
( for in this case $a\equiv d$ and $e \equiv h \pmod{3}$),
\item $C \subset \bar \Gm_0(2^{\ell+\min(3, e)}),$ 
if $M=2^\ell$ with $\ell \ge 3$
(for in this case $a \equiv d$ and $e \equiv h \pmod{\min(8, 2^e)}$). 
\end{itemize}
Further, 
$$
\bpm 1&0 \\ 3^{\ell+1} & 1 \epm =
\bpm 1&0\\ (2^2-1) 3^\ell &1\epm
\in C, \qquad \text{whenever } 
M = 3^\ell,  
$$
$$
\bpm 1&0 \\ 2^{\ell+3} & 1 \epm =
\bpm 1&0\\ (3^2-1) 2^\ell &1\epm
\in C, \qquad \text{whenever } 
M = 2^\ell.  
$$
By lemma \ref{B+M=abcMd}, this matrix, together with $\bar \Gm_1(N),$
generates $\bar \Gm_1(M_1),$ for $M=3^\ell, \, \ell >0$ or $M=2^\ell, \ell> 2.$ 

This leaves the cases $M=2,4$ and $M=1$ when $q=2,3.$  

In the case $M=4,$ we known that $\bar\Gm_1(32) \subset \bar \Gm.$  
 To complete the proof,
 one checks that,
 taken mod $32,$ the bottom row of an 
  element of $C$
  is equivalent to one of the following:
 $$
 (0, 1), (0, 9), (0, 17), (0, 25), (16, 5), (16, 13), (16, 21), (16, 
  29),
 $$
 from which it follows that $\bar \Gm_1(16) \not\subset \bar \Gm$ (unless $N \le 16,$ of course). 
 
 In the case $M=2,$ one 
 knows that $\bar \Gm_1(16) \subset \bar\Gm,$ and 
 checks that,
 taken mod $16,$ the bottom row of an 
  element of $C$
  is equivalent to one of the following:
 $$(0, 1), (0, 5), (0, 9), (0, 13), (4, 3), (4, 7), (4, 11), (4, 
  15), (8, 1), (8, 5), (8, 9), (8, 13), (12, 3), (12, 7), (12, 
  11), (12, 15)$$
 (with each possibility being attained).
  It easily follows that $\bar \Gm$ contains $\bar \Gm_1(8)$ but not $\bar \Gm_1(4).$
  
  It remains to prove $\bar \Gm = SL(2, \Z/q^e\Z)$ in the case 
  when $q=2,3,$ and $\ell=0.$ 
 We treat the case $q=3$ in detail.  The 
 case $q=2$ is similar.
  Arguing as above, we find that 
  $\bar \Gm \supset \bar \Gm_1(3).$ 
  Thus, we may as well assume $e=1.$ 
  If $\bar \Gm \ne SL(2, \Z/3\Z)$ then there 
  is a homomorphism $\varphi$ from $SL(2, \Z/3\Z)$ to an abelian group which is trivial on $\bar \Gm_1(3)$ and hence constant on 
  each double coset in $\bar\Gm_1(3) \bs SL(2, \Z/3\Z) / \bar\Gm_1(3).$  There are four
 such cosets: $$\begin{aligned}
 e &:= \left\{\left.\bpm 1&b\\ 0 & 1 \epm \right|  b \in \Z/3\Z \right\},\\
 x&:=  \left\{\left.\bpm -1&b\\ 0 & -1 \epm \right|  b \in \Z/3\Z \right\},\\
 y&:= \left\{ \left.\bpm a&b\\ 1 & d \epm \right|  a,b,d \in \Z/3\Z, \; ad-b=1 \right\},\\
 z&:= \left\{ \left.\bpm a&b\\ -1 & d \epm \right|  a,b,d \in \Z/3\Z, \; ad+b=1 \right\}.
 \end{aligned}
  $$
  Suppose that $\bar \phi$ is a mapping from these four cosets to some abelian group $A,$ such that the function $\phi$ obtained by composing $\bar \phi$ with the 
  natural projection $SL(2, \Z/3\Z) \to \bar\Gm_1(3) \bs SL(2, \Z/3\Z) / \bar\Gm_1(3)$
 is a homomorphism.  
 Let $\bar e, \bar x, \bar y,$ and $\bar z$ denote the respective images of $e,x,y$ and $z$ in $A.$  Clearly, $\bar e$ is the identity and $\bar x^2 = \bar e.$
 Since
 $$
 \bpm 0&1 \\ -1&0 \epm^2 = \bpm 0& -1 \\1&0 \epm^2 = \bpm -1&0\\0&-1 \epm,
 $$  
 it follows that $\bar y^2=\bar z^2 =\bar x.$  On the other hand 
 $$ \bpm 1&0\\ 1& 1\epm^2 =\bpm 1&0\\ -1& 1\epm \implies \bar y^2 = \bar z,$$
 which implies that $\bar z= \bar e,$ whence $\bar x=\bar e.$  Finally $\bar x\bar z = \bar y,$  so it turns out $\phi$ is trivial.   
 This completes the proof of proposition \ref{Gm1(M1)}.
   \end{proof}

\section{Reformulation of the problem, definition of $B_{f,N_1}(\gm, \cdot)$}
\label{section: Reformulation of the problem}

 It will be useful to first isolate the {\it part} of $B_f(\gm, \alpha)$ whose multiplicativity is in doubt.
Using the results and notation of  \cite{GHL}, one has
 \begin{equation}\label{e:baalpha=prod}
 \begin{aligned}
 B_f(\gm, \alpha)&=
  \frac{W_f\left( i_\infty\left(\begin{pmatrix} y_\infty & \\ & 1\end{pmatrix}\right)\cdot g( \gm, \alpha) \right)}{ W_{\frac{\text{sign}(\alpha)k}2, \nu -\frac 12} (4\pi y_\infty)}\\
  & =
  \frac{W_{f, \infty}\begin{pmatrix} \operatorname{sign}(\alpha)y_\infty & \\ & 1\end{pmatrix}}{ W_{\frac{\text{sign}(\alpha)k}2, \nu -\frac 12} (4\pi y_\infty)}
  \cdot \prod_p W_{f,p}\left(\begin{pmatrix} \alpha & \\ & 1\end{pmatrix}\gamma^{-1}\right),
  \end{aligned}
 \end{equation}
  for any  $y_\infty \in(0,\infty)$ such that $ W_{\frac{\text{sign}(\alpha)k}2, \nu -\frac 12} (4\pi y_\infty)\ne 0$
 (cf. \cite{GHL}, equation (37)
 ).
 Here, 
   \begin{equation}\label{gaalpha}
 g( \gm, \alpha) = 
i_\infty\left( \bpm \on{sign}(\alpha)&\\&1 \epm
\right)
 i_{_{\text{finite}}}\left( \bpm \alpha &\\&1\epm \gamma^{-1}
\right).
 \end{equation}
  as in equation (36) 
  of \cite{GHL}. 

  \begin{lemma}
 Let $N \ge 1$ be an integer and let $f$ be a Maass form satisfying 
 $W_f(g) = \prod_v W_v(g_v)$ for all 
 $g = \{ g_v\} \in GL(2, \A).$  For $\gm =\bspm a&b\\c&d \espm\in SL(2, \Z)$ and $\alpha \in \Q^\times$ let 
 \begin{equation}\label{gaalpha}
 g( \gm, \alpha) = 
i_\infty\left( \bpm \on{sign}(\alpha)&\\&1 \epm
\right)
 i_{_{\text{finite}}}\left( \bpm \alpha &\\&1\epm \gamma^{-1}
\right).
 \end{equation}
 as in equation (36) 
  of \cite{GHL}.  Then $W_v( g( \gm, \alpha))$
 is a multiplicative function of $\alpha \in \Q^\times$ in all of the following cases:
 \begin{enumerate}
 \item $v=\infty$
 \item $v=p \nmid N,$
 \item $v=q\mid N$ and $v_q(c) = v_q(N),$
 \item $v=q\mid N$ and $v_q(d) = v_q(N),$
 \item $v=2 \mid N$ and $v_2(c) = v_2(N)-1.$
 \end{enumerate}
 \label{lem:5conditionsForMultiplicativeAtv}
 \end{lemma}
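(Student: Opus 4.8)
Here is the plan.

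The plan is to treat each place of $\Q$ in turn, using \eqref{gaalpha} to rewrite $W_v(g(\gm,\alpha))$ as a concrete local Whittaker value, and then to exhibit $\alpha\mapsto W_v(g(\gm,\alpha))$ as a product of functions each visibly of the shape \eqref{e:multiplicativity}. Three building blocks are used repeatedly: a homomorphism $\Q^\times\to\C^\times$ is automatically multiplicative; a function of $\sgn(\alpha)$ alone, or of a single valuation $v_q(\alpha)$ alone, is multiplicative (take it as $h_\infty$, resp.\ as $h_q$, and all remaining factors trivial); and a product of multiplicative functions is multiplicative (the finite exceptional set for the condition $h_p(0)=1$ being a union of finite sets). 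Cases (1) and (2) are then immediate: at $v=\infty$ the archimedean component of $g(\gm,\alpha)$ is $\bspm\sgn(\alpha)&\\&1\espm$, so $W_\infty(g(\gm,\alpha))$ depends only on $\sgn(\alpha)$; at $v=p\nmid N$ the factor $W_p$ is right-invariant under $GL(2,\Z_p)$, and since $\gm^{-1}\in GL(2,\Z)$ and $\bspm\alpha&\\&1\espm=\bspm p^{v_p(\alpha)}&\\&1\espm\bspm u&\\&1\espm$ with $u\in\Z_p^\times$, we get $W_p(g(\gm,\alpha))=W_p\bspm p^{v_p(\alpha)}&\\&1\espm$, a function of $v_p(\alpha)$ alone.

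For (3)--(5) fix the prime $q\mid N$ and put $e:=v_q(N)$; here $W_q(g(\gm,\alpha))=W_q\big(\bspm\alpha&\\&1\espm\gm^{-1}\big)$, with $W_q$ the local new vector. The first step is to locate $\gm^{-1}=\bspm d&-b\\-c&a\espm$ in a coset of $K_0(q^e)\subseteq GL(2,\Z_q)$ (the stabilizer of the line of $\bspm1\\0\espm$ modulo $q^e$), such a coset being determined by the image of the first column in $\P^1(\Z_q/q^e\Z_q)$. In case (3), $q^e\mid c$ gives $\gm^{-1}\in K_0(q^e)$. In case (4), $q^e\mid d$ forces $q\nmid c$ (else $\det\gm$ is not a $q$-adic unit), so $\gm^{-1}\in w_0K_0(q^e)$ with $w_0=\bspm0&1\\-1&0\espm$; more precisely $\gm^{-1}=\bspm1&*\\0&1\espm w_0\kappa_2$ with $\kappa_2\in K_0(q^e)$ and $*=-d/c\in q^e\Z_q$. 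In case (5) ($q=2$) one checks, using $\det\gm\in\Z_2^\times$, that $\gm^{-1}\in\bspm1&0\\2^{e-1}&1\espm K_0(2^e)$ when $d$ is odd, while when $d$ is even one is again in $w_0K_0(2^e)$ and proceeds as in case (4).

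Now recall (the theory of local newforms; cf.\ \cite{GHL}) that $W_q$ is right-invariant under $K_1(q^e)$ and transforms under $K_0(q^e)$ by a character $\lambda$ which, modulo $K_1(q^e)$, reads off the lower-right entry. Writing $\bspm\alpha&\\&1\espm=\bspm q^m&\\&1\espm\bspm u&\\&1\espm$ with $m=v_q(\alpha)$, $u\in\Z_q^\times$, and transporting $\bspm u&\\&1\espm$ together with the $K_0(q^e)$-factors to the right of $w\in\{I_2,w_0,\bspm1&0\\2^{e-1}&1\espm\}$ (which costs only a constant $\lambda(\kappa_2)$, a factor $\lambda_0(u)$ for a character $\lambda_0$, and---in case (4) only---a factor $\psi_q(\alpha\cdot{*})$ with ${*}\in q^e\Z_q$), one arrives at
\[
W_q(g(\gm,\alpha))=c(\gm)\cdot\rho(\alpha)\cdot\Phi_w(v_q(\alpha)),
\]
where $c(\gm)$ is a constant, $\rho=\lambda_0\circ(\text{unit part at }q)$ is a homomorphism $\Q^\times\to\C^\times$, and $\Phi_w(m):=W_q\big(\bspm q^m&\\&1\espm w\big)$ depends on $m$ alone---except that in case (4) the leftover $\psi_q$-factor is still there. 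That factor equals $1$ as soon as $v_q(\alpha)\ge-e$, so it does no harm provided $\Phi_{w_0}(m)=0$ for $m<-e$. This last fact is where Atkin--Lehner theory enters: $w_0$ differs from the Atkin--Lehner element $\bspm0&1\\-q^e&0\espm$ by a right diagonal factor, which identifies $\Phi_{w_0}$, up to a power of $\omega_q(q)$ and the Atkin--Lehner pseudo-eigenvalue, with the $q$-Whittaker coefficient of the newform attached to a twist of $\pi_q$---whence its support lies in $m\ge-e$.

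In each of cases (3)--(5) we have thus written $\alpha\mapsto W_q(g(\gm,\alpha))$ as a product of a constant, a homomorphism $\Q^\times\to\C^\times$, and a function of $v_q(\alpha)$, which by the building blocks above is multiplicative; together with cases (1) and (2) this proves the lemma. The step I expect to be the main obstacle is precisely the cancellation of $\psi_q$ in case (4): it relies both on the hypothesis $v_q(d)\ge e$ (which pushes the argument of $\psi_q$ into $q^e\Z_q$) and on the Atkin--Lehner description of the support of $\Phi_{w_0}$, and absent this coincidence $W_q(g(\gm,\alpha))$ would not obviously factor through $v_q(\alpha)$ and a homomorphism. The $q=2$ coset bookkeeping in case (5), in particular the case distinction on the parity of $d$ when $e=1$, is a second, milder, source of technical fuss.
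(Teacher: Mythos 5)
Your proof is correct and establishes exactly the local dependence statements that the paper's one-line proof imports from the proof of theorem 35 of \cite{GHL} (namely: $W_\infty$ a function of $\sgn(\alpha)$ alone; $W_p$ in cases (2), (3), (5) a constant times a function of the single valuation $v_p(\alpha)$; case (4) a homomorphism $\alpha\mapsto\lambda_0(\alpha|\alpha|_q)$ times a function of $v_q(\alpha)$), so the approach is essentially the same, with the newvector/Bruhat computations supplied rather than cited. Your isolation of the additive-character factor $\psi_q(-\alpha d/c)$ in case (4), and its cancellation against the support condition $v_q(\alpha)\ge -v_q(N)$ for $W_q\left(\bspm q^m&\\&1\espm w_0\right)$ coming from Atkin--Lehner theory, is precisely the point the citation glosses over, and you handle it correctly.
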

 \begin{proof}
 We have seen in the proof of theorem 35 
 of \cite{GHL} that  $W_v( g( \gm, \alpha))$ 
depends only on the sign of $\alpha$ in case (1) and only on $|\alpha|_p^{-1}$ in case (2).  In cases (3) and (5) it depends only on $|\alpha|_q^{-1}.$  Finally, in case (4) it is equal to $\widetilde \chi_{_{\text{idelic}}}\left( i_q\left( \bspm 1&0\\0&\alpha|\alpha|_q \espm\right)\right)$ times a function which depends only on $|\alpha|_q.$
 \end{proof}
 \begin{definition}\label{d:B_N_1}
 Let $N \ge 1$ be an integer and let $f$ 
 be a Maass form of level $N$ such that 
 $W_f(g) = \prod_v W_v(g_v)$ for all 
 $g = \{ g_v\} \in GL(2, \A).$ 
 Fix $\gm=\bspm a&b\\c&d \espm \in SL(2, \Z).$ 
 Set
 $N_1$
equal to $\prod q^{v_q(N) -v_q(c)},$ 
with the product taken over
  those primes $q\mid N$ which do not satisfy any of the five conditions listed in lemma 
\ref{lem:5conditionsForMultiplicativeAtv}.
Finally, set 
$B_{f,N_1}( \gm, \alpha)$ equal to the product
of the local Whittaker functions $W_q(g(\gm, \alpha)),$ taken over $q \mid N_1.$
 \end{definition}
 \begin{lemma}
 Keep the notation and assumptions of definition \ref{d:B_N_1}.  
 \label{l:bN1timesMultve}
 Then the function $B_f(\gm, \alpha)$ is 
 equal to $B_{f,N_1}(\gm, \alpha)$ times 
 a multiplicative function.
 \end{lemma}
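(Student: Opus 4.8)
The plan is to unwind the factorization in \eqref{e:baalpha=prod} and argue term by term. By hypothesis the adelic Whittaker function factors, so $B_f(\gm,\alpha)$ is the product over all places $v$ of the local factors $W_v(g(\gm,\alpha))$ (with the archimedean factor suitably normalized by the denominator $W_{\frac{\sgn(\alpha)k}{2},\nu-\frac12}(4\pi y_\infty)$, which is independent of $\alpha$ up to the choice of sign). Group the places into two sets: those $v$ that satisfy at least one of the five conditions in lemma \ref{lem:5conditionsForMultiplicativeAtv}, and those $q\mid N_1$ that satisfy none. The product of the first group of factors is, by lemma \ref{lem:5conditionsForMultiplicativeAtv}, a product of multiplicative functions of $\alpha\in\Q^\times$, hence itself multiplicative (the class of multiplicative functions in the sense of the definition is clearly closed under pointwise products, since one just multiplies the corresponding $h_\infty$ and $h_p$ component-wise, and the support condition $h_p(0)=1$ for almost all $p$ is preserved). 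The product of the second group is exactly $B_{f,N_1}(\gm,\alpha)$ by definition \ref{d:B_N_1}.

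First I would note that every place is covered: if $v=\infty$ we are in case (1); if $v=p\nmid N$ we are in case (2); and if $v=q\mid N$ then either $q$ satisfies one of conditions (3)--(5) or, by the definition of $N_1$, $q\mid N_1$. So the two groups partition all places, and since only finitely many $q$ divide $N_1$, the second product is finite and genuinely defines $B_{f,N_1}(\gm,\cdot)$. Then I would observe that for $v$ in the first group, a place $p\nmid N$ contributes $h_p$ with $h_p(e)$ depending only on $|\alpha|_p^{-1}=p^{e}$ (with $e=v_p(\alpha)$), so in particular $h_p(0)=W_p(\text{identity-type element})$; one checks this equals the value forced by $B_f(\gm,1)$ being the empty-ish product, i.e. that $h_p(0)$ is the ``unramified'' value, which is $1$ for all but finitely many $p$ because $f$ has level $N$ and the local Whittaker function is spherical away from $N$ and normalized. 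The archimedean factor depends only on $\sgn(\alpha)$ after dividing by the $\alpha$-independent denominator, giving $h_\infty$. The remaining finitely many factors from cases (3), (4), (5) each depend multiplicatively on $\alpha$ at the single prime $q$ in question (in case (4) via an idele class character composed with $\alpha\mapsto\alpha|\alpha|_q$, which is still a function of $v_q(\alpha)$ alone, times a function of $|\alpha|_q$), so they too fit the template.

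Assembling: write $B_f(\gm,\alpha) = B_{f,N_1}(\gm,\alpha)\cdot\prod_{v\in\text{first group}}W_v(g(\gm,\alpha))$, and the second factor is multiplicative by the closure property just described. That is precisely the assertion of lemma \ref{l:bN1timesMultve}.

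The main obstacle I anticipate is bookkeeping rather than conceptual: one must make sure the normalization is handled consistently, i.e. that the denominator $W_{\frac{\sgn(\alpha)k}{2},\nu-\frac12}(4\pi y_\infty)$ does not spoil multiplicativity (it does not, since it only contributes to $h_\infty$ and is independent of the finite-adelic data), and that the ``$h_p(0)=1$ for almost all $p$'' condition is actually verified and not merely asserted — this is where one invokes that $W_{f,p}$ is the normalized spherical Whittaker function for $p\nmid N$, whose value at $\bspm1&\\&1\espm$ is $1$. Everything else is a direct consequence of lemma \ref{lem:5conditionsForMultiplicativeAtv} and the evident fact that finite products of multiplicative functions are multiplicative.
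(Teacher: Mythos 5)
Your argument is correct and is exactly the paper's (the paper dispatches this lemma in one line as ``an obvious consequence of lemma \ref{lem:5conditionsForMultiplicativeAtv} and \eqref{e:baalpha=prod}''): factor $B_f(\gm,\alpha)$ via the Euler product, observe that the places fall into either the five cases of lemma \ref{lem:5conditionsForMultiplicativeAtv} or the primes dividing $N_1$, and use that a finite product of multiplicative functions is multiplicative. Your extra care about the archimedean normalization and the $h_p(0)=1$ condition at unramified primes is sound but just fills in details the paper leaves implicit.
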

 \begin{proof}
This is an obvious consequence of
lemma \ref{lem:5conditionsForMultiplicativeAtv}
and \eqref{e:baalpha=prod}. \end{proof}
 
 Of course, it does {\em not} follow from Lemma \ref{l:bN1timesMultve} that $B_f(\gm, \cdot)$ is 
 multiplicative if and only if $B_{f,N_1}(\gm, \cdot)$ is,
 as the discussion of the case $N=8$ at the end of section 6 
 in \cite{GHL} makes clear.  On the contrary, it is possible for $B_f(\gm, \cdot)$ to be multiplicative even when $B_{f,N_1}(\gm, \cdot)$ is not, provided $\prod_{v\nmid N_1} W_v( g( \gm , \alpha))$ vanishes sufficiently often.
 The next lemma and corollary deal with the question of when a product of two functions will be multiplicative.
 
 \begin{lemma}
Let $h: \Q^\times \to \C$ be a function. Then  $h$ is multiplicative if and only if 
$h( \alpha_1 \beta_1) h( \alpha_2\beta_2) = h( \alpha_1\beta_2)h(\alpha_2\beta_1)$ 
for all $\alpha_1, \alpha_2, \beta_1, \beta_2 \in \Q^\times$ such that 
$$|\alpha_1|_p\text{ or } |\alpha_2|_p \ne 1 \implies |\beta_1|_p = |\beta_2|_p =1 \qquad \forall \text{ primes }p.$$
\end{lemma}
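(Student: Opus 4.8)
\emph{Plan.}  I would prove the two implications separately, working throughout with the identification $\Q^{\times}\cong\{\pm1\}\times\bigoplus_{p}p^{\Z}$ that records the sign of $\alpha$ together with its $p$-adic valuations $v_{p}(\alpha)$.  Under this identification the hypothesis imposed on $(\alpha_{1},\alpha_{2},\beta_{1},\beta_{2})$ says precisely that the finite set of primes $p$ at which $|\alpha_{1}|_{p}\ne1$ or $|\alpha_{2}|_{p}\ne1$ is disjoint from the corresponding set for $\beta_{1},\beta_{2}$; equivalently, at each prime $p$ one of the two pairs $\{\alpha_{1},\alpha_{2}\}$, $\{\beta_{1},\beta_{2}\}$ consists of $p$-adic units.

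The forward implication is essentially a direct computation: assuming $h$ multiplicative, one expands each of $h(\alpha_{1}\beta_{1}),h(\alpha_{2}\beta_{2}),h(\alpha_{1}\beta_{2}),h(\alpha_{2}\beta_{1})$ into local factors via \eqref{e:multiplicativity} and compares the two products place by place.  At a place where one of the pairs is trivial, the $v$-component of each $\alpha_{i}\beta_{j}$ depends only on the other pair, so both sides acquire exactly the same two local factors there; multiplying over all places gives the four-term identity.

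The content is the reverse implication.  If $h\equiv0$ there is nothing to prove, so fix $\gamma^{*}\in\Q^{\times}$ with $h(\gamma^{*})\ne0$.  For a place $v$ and an admissible $v$-component $t$ (a sign if $v=\infty$, an integer if $v=p$) let $\gamma^{*}_{v\mapsto t}$ be the rational obtained from $\gamma^{*}$ by replacing its $v$-component by $t$, and put $h_{\infty}(\delta):=h(\gamma^{*}_{\infty\mapsto\delta})/h(\gamma^{*})$ and $h_{p}(e):=h(\gamma^{*}_{p\mapsto e})/h(\gamma^{*})$.  Since $\gamma^{*}$ has only finitely many prime factors, $h_{p}(0)=1$ for all but finitely many $p$, and in every case $h_{v}(\gamma^{*}_{v})=1$.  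The key step is to show, by induction on the number $r$ of places at which $\alpha$ and $\gamma^{*}$ disagree, that
\[
h(\alpha)=h(\gamma^{*})\prod_{v}h_{v}(\alpha_{v})
\]
(a finite product).  The case $r=0$ is trivial.  For the inductive step, choose a place $v_{1}$ at which $\alpha$ and $\gamma^{*}$ differ, let $\alpha'$ agree with $\gamma^{*}$ at $v_{1}$ and with $\alpha$ everywhere else, and apply the four-term hypothesis to the tuple whose ``moving'' pair is supported only at $v_{1}$ --- the pair $(p^{v_{p}(\alpha)},p^{v_{p}(\gamma^{*})})$ if $v_{1}=p$, or the pair of signs $(\sgn\alpha,\sgn\gamma^{*})$ if $v_{1}=\infty$ --- and whose other pair is obtained from $\alpha'$ and $\gamma^{*}$ by deleting the $v_{1}$-component.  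That other pair is trivial at $v_{1}$, and the moving pair involves at most the single prime $v_{1}$, so the coprimality condition is satisfied; the resulting identity is
\[
h(\alpha)\,h(\gamma^{*})=h(\gamma^{*}_{v_{1}\mapsto\alpha_{v_{1}}})\,h(\alpha'),
\]
and combining it with the definition of $h_{v_{1}}$ and the inductive hypothesis for $\alpha'$ (which disagrees with $\gamma^{*}$ at exactly $r-1$ places) yields the displayed identity for $\alpha$.  Finally, absorbing the constant $h(\gamma^{*})$ into the archimedean factor --- that is, replacing $h_{\infty}(\delta)$ by $h(\gamma^{*})h_{\infty}(\delta)=h(\gamma^{*}_{\infty\mapsto\delta})$ --- puts $h$ into the shape \eqref{e:multiplicativity}, so $h$ is multiplicative.

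The main obstacle is the bookkeeping inside the inductive step: checking that the tuple one writes down really satisfies ``$|\alpha_{1}|_{p}$ or $|\alpha_{2}|_{p}\ne1\Rightarrow|\beta_{1}|_{p}=|\beta_{2}|_{p}=1$ for all primes $p$,'' and unwinding the resulting identity cleanly.  I would also highlight the one structural subtlety that makes the proof work uniformly: the local factors are normalized against an \emph{arbitrary} nonzero value $h(\gamma^{*})$ rather than against $h(1)$, so no case distinction on whether $h(1)$ vanishes is needed (in particular the argument covers functions vanishing on all of $\Q^{\times}_{>0}$); when $h(1)\ne0$ one may take $\gamma^{*}=1$ and recovers the transparent formulas $h_{p}(e)=h(p^{e})/h(1)$ and $h_{\infty}(\delta)=h(\delta)$.
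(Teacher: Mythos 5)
Your proof is correct and takes essentially the same route as the paper: the paper likewise normalizes against an arbitrary $\alpha_0$ with $h(\alpha_0)\ne 0$, defines $h_p(k)=h(\alpha_0|\alpha_0|_p\,p^k)/h(\alpha_0)$ and absorbs the constant into the archimedean factor, and leaves your place-by-place induction as a ``straightforward check.'' One caveat you share with the paper: the ``only if'' direction is genuinely problematic at the archimedean place, since the hypothesis puts no constraint on signs (e.g.\ $\alpha_1=\beta_1=1$, $\alpha_2=\beta_2=-1$ is an admissible tuple, and the identity then forces $h_\infty(1)^2=h_\infty(-1)^2$, which the definition of multiplicativity does not guarantee) --- this does not affect the ``if'' direction, which is the one used in the sequel, where $h$ is only evaluated on positive rationals in $G_N$.
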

\begin{proof}
The ``only if'' part is trivial.  To prove the ``if'' part, we may assume that $h$ is nonzero and fix $\alpha_0 \in \Q^\times$ with 
$h( \alpha_0) \ne 0.$  
Define 
$$\begin{aligned}
h_\infty( \text{sign}( \alpha_0)) &= h( \alpha_0),\\
h_\infty( - \text{sign}( \alpha_0)) &= h( -\alpha_0)/h( \alpha_0),\\
h_p(p^k) = h(\alpha_0 |\alpha_0|_p p^k)/h( \alpha_0).
\end{aligned}
$$
Then it is a straightforward check that
$h, h_\infty,$ and $\{ h_p: p \text{ prime}\}$
satisfy \eqref{e:multiplicativity}.
\end{proof}
\begin{remark}The condition $$|\alpha_1|_p\text{ or } |\alpha_2|_p \ne 1 \implies |\beta_1|_p = |\beta_2|_p =1 \qquad \forall \text{ primes }p$$
will be used repeatedly.
It can be thought of as a way of saying $\gcd(\alpha_i, \beta_j) =1, \; i,j=1,2,$ given that $\alpha_i,$ and $\beta_j$ are rational numbers.
\end{remark}

\begin{corollary}\label{c:multveCond}
If $h_1, h_2$ are two functions 
$\Q^\times\to \C$ and $h_1$ is multiplicative, then $h_1h_2$ is multiplicative if and only if 
$h_2( \alpha_1 \beta_1) h_2( \alpha_2\beta_2) = h_2( \alpha_1\beta_2)h_2(\alpha_2\beta_1)$ 
for all $\alpha_1, \alpha_2, \beta_1, \beta_2 \in \Q^\times$ such that 
$$|\alpha_1|_p\text{ or } |\alpha_2|_p \ne 1 \implies |\beta_1|_p = |\beta_2|_p =1 \qquad \forall \text{ primes }p,$$
{\em and} $h_1( \alpha_1\beta_1) h_1(\alpha_2\beta_2) \ne 0.$
\end{corollary}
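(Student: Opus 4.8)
The plan is to derive Corollary~\ref{c:multveCond} directly from the preceding Lemma, applied to the function $h := h_1 h_2$. By that Lemma, $h_1 h_2$ is multiplicative if and only if the cross relation
$$
(h_1 h_2)(\alpha_1\beta_1)\,(h_1 h_2)(\alpha_2\beta_2) = (h_1 h_2)(\alpha_1\beta_2)\,(h_1 h_2)(\alpha_2\beta_1)
$$
holds for every quadruple $(\alpha_1,\alpha_2,\beta_1,\beta_2)\in(\Q^\times)^4$ satisfying the coprimality condition ($|\alpha_1|_p\ne 1$ or $|\alpha_2|_p\ne 1 \implies |\beta_1|_p=|\beta_2|_p=1$ for all primes $p$); call such a quadruple \emph{admissible}. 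So it suffices to show that, given that $h_1$ is multiplicative, the cross relation for $h_1 h_2$ over all admissible quadruples is equivalent to the cross relation for $h_2$ over those admissible quadruples that additionally satisfy $h_1(\alpha_1\beta_1)h_1(\alpha_2\beta_2)\ne 0$.

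First I would invoke the ``only if'' half of the Lemma for $h_1$: since $h_1$ is multiplicative, $h_1(\alpha_1\beta_1)h_1(\alpha_2\beta_2) = h_1(\alpha_1\beta_2)h_1(\alpha_2\beta_1)$ for every admissible quadruple; in particular the left-hand product vanishes if and only if the right-hand one does. Then fix an admissible quadruple and split into two cases. If $h_1(\alpha_1\beta_1)h_1(\alpha_2\beta_2)=0$, then one of $h_1(\alpha_1\beta_1),\,h_1(\alpha_2\beta_2)$ vanishes, so the left side of the $h_1 h_2$ cross relation is $0$; since $h_1(\alpha_1\beta_2)h_1(\alpha_2\beta_1)=0$ as well, the right side is also $0$, and the $h_1 h_2$ cross relation holds automatically, imposing no condition on $h_2$. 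If instead $h_1(\alpha_1\beta_1)h_1(\alpha_2\beta_2)\ne 0$, then $h_1(\alpha_1\beta_2)h_1(\alpha_2\beta_1)\ne 0$ too and equals $h_1(\alpha_1\beta_1)h_1(\alpha_2\beta_2)$; writing the $h_1 h_2$ cross relation as
$$
\bigl(h_1(\alpha_1\beta_1)h_1(\alpha_2\beta_2)\bigr)\,h_2(\alpha_1\beta_1)h_2(\alpha_2\beta_2) = \bigl(h_1(\alpha_1\beta_2)h_1(\alpha_2\beta_1)\bigr)\,h_2(\alpha_1\beta_2)h_2(\alpha_2\beta_1)
$$
and cancelling the common nonzero scalar reduces it to exactly $h_2(\alpha_1\beta_1)h_2(\alpha_2\beta_2) = h_2(\alpha_1\beta_2)h_2(\alpha_2\beta_1)$.

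Combining the two cases, the $h_1 h_2$ cross relation holds for every admissible quadruple precisely when the $h_2$ cross relation holds for every admissible quadruple with $h_1(\alpha_1\beta_1)h_1(\alpha_2\beta_2)\ne 0$; by the Lemma applied to $h_1 h_2$, the former is equivalent to multiplicativity of $h_1 h_2$, which is the Corollary. There is no real obstacle here beyond bookkeeping; the one point that must be used with care is that, on admissible quadruples, the vanishing loci of $h_1(\alpha_1\beta_1)h_1(\alpha_2\beta_2)$ and of $h_1(\alpha_1\beta_2)h_1(\alpha_2\beta_1)$ coincide — which is exactly the cross relation for the multiplicative function $h_1$ — so the degenerate quadruples contribute no hidden constraint.
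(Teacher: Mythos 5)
Your argument is correct and is exactly the intended one: the paper states the corollary without proof as an immediate consequence of the preceding lemma, and your case split on the vanishing of $h_1(\alpha_1\beta_1)h_1(\alpha_2\beta_2)$ (using the $h_1$ cross relation to see that the two vanishing loci coincide on admissible quadruples) is the bookkeeping the paper leaves to the reader.
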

In order to proceed further, we record an alternate formula for $W_q( g( \gm, \alpha))$ for values of $q$ not covered by lemma \ref{lem:5conditionsForMultiplicativeAtv}.
In order to place this result within a transparent conceptual setting, let us first point out that $B_f(\gm, \alpha)$ depends only on the bottom row of $\gm$ taken modulo $N,$ and that, up to scalar, it depends only on the corresponding point in $\P^1(\Z/N\Z).$ 
\begin{lemma}
Let $N \ge 1$ be an integer and $f$ a 
Maass form of level $N$ and character $\chi \pmod{N}.$ 
Take $\gm_1= \bspm a_1&b_1\\c_1&d_1 \espm, \gm_2= \bspm a_2&b_2\\c_2&d_2 \espm \in SL(2, \Z).$  If
$( c_1 , d_1) \equiv (c_2, d_2) \pmod{N},$
then $B_f(\gm_1, \alpha) = B_f( \gm_2, \alpha)$ for all $\alpha \in \Q^\times.$  If  
$[c_1:d_1]=[c_2: d_2]$ in $\P^1( \Z/N\Z),$ then there is a constant $c \in \C$ such that
$B_f(\gm_1, \alpha) = cB_f( \gm_2, \alpha)$
for all $\alpha \in \Q^\times.$
\end{lemma}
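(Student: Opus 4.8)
The plan is to deduce both statements from two elementary facts. First, for $\gamma_0\in\Gamma_1(N)$ one has $f\big|_k\gamma_0=f$, because the nebentypus character of $f$ is a character of $\Gamma_0(N)/\Gamma_1(N)$ and so is trivial on $\Gamma_1(N)$. Second --- as already noted in the text --- for $\gamma_0\in\Gamma_0(N)$ and $\gamma\in SL(2, \Z)$ the function $\alpha\mapsto B_f(\gamma_0\gamma,\alpha)$ is a nonzero scalar multiple of $\alpha\mapsto B_f(\gamma,\alpha)$; this is immediate from the Fourier integral defining $B_f$ together with $f\big|_k(\gamma_0\gamma)=(f\big|_k\gamma_0)\big|_k\gamma$. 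Granting these, the task reduces to producing a suitable $\gamma_0\in\Gamma_0(N)$ linking $\gamma_1$ and $\gamma_2$.

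For the first assertion, I would assume $(c_1,d_1)\equiv(c_2,d_2)\pmod N$ and set $\gamma_0:=\gamma_2\gamma_1^{-1}\in SL(2, \Z)$. Its bottom row is $(c_2,d_2)\gamma_1^{-1}=(c_2d_1-d_2c_1,\,-c_2b_1+d_2a_1)$, which modulo $N$ becomes $(0,\,a_1d_1-b_1c_1)=(0,1)$ (using $c_2\equiv c_1$, $d_2\equiv d_1$). Since $\det\gamma_0=1$, this forces $\gamma_0\equiv\bspm1&*\\0&1\espm\pmod N$, i.e.\ $\gamma_0\in\Gamma_1(N)$. Then $f\big|_k\gamma_2=(f\big|_k\gamma_0)\big|_k\gamma_1=f\big|_k\gamma_1$, so the functions $\alpha\mapsto B_f(\gamma_1,\alpha)$ and $\alpha\mapsto B_f(\gamma_2,\alpha)$ are extracted from one and the same Fourier expansion, and therefore agree.

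For the second assertion, $[c_1:d_1]=[c_2:d_2]$ in $\P^1(\Z/N\Z)$ says $(c_2,d_2)\equiv u(c_1,d_1)\pmod N$ for some $u\in(\Z/N\Z)^\times$. I would lift $u$ to an integer coprime to $N$, pick $v\in\Z$ with $uv\equiv1\pmod N$, and take $\gamma_u:=\bspm v&(uv-1)/N\\N&u\espm$; this has integer entries, determinant $uv-(uv-1)=1$, and lower-left entry $N$, hence lies in $\Gamma_0(N)$, and $\gamma_u\gamma_1$ has bottom row $(Na_1+uc_1,\,Nb_1+ud_1)\equiv(c_2,d_2)\pmod N$. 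Applying the first assertion to $\gamma_u\gamma_1$ and $\gamma_2$ gives $B_f(\gamma_u\gamma_1,\cdot)=B_f(\gamma_2,\cdot)$, while the second fact above gives $B_f(\gamma_u\gamma_1,\cdot)=c_0\,B_f(\gamma_1,\cdot)$ for a nonzero constant $c_0$; so $c=c_0^{-1}$ does the job.

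I do not expect a genuine obstacle here; the one point that needs care is carrying out the mod-$N$ reduction of the bottom row of $\gamma_2\gamma_1^{-1}$ accurately enough to land in $\Gamma_1(N)$ and not merely in $\Gamma_0(N)$, since it is precisely that refinement which upgrades the up-to-scalar statement to the \emph{exact} equality asserted first. (If one prefers to argue adelically under the standing assumption that $W_f$ factors as a product of local Whittaker functions, the same observation reappears as the fact that $\gamma_2\gamma_1^{-1}\in SL(2, \Z)$ reduces to a unipotent upper-triangular matrix modulo $p^{v_p(N)}$ for each $p\mid N$, hence fixes the local newvector, so that none of the factors $W_{f,p}\big(\bspm\alpha&\\&1\espm\gamma^{-1}\big)$ in \eqref{e:baalpha=prod} changes when $\gamma_1$ is replaced by $\gamma_2$; but the elementary argument above is shorter and uses nothing about $f$ beyond its modularity.)
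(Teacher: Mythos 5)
Your proposal is correct and follows essentially the same route as the paper: the paper simply computes that $\gm_1\gm_2^{-1}\equiv\bspm *&*\\0&\lambda\espm\pmod N$ when $(c_1,d_1)\equiv\lambda(c_2,d_2)$, so that $f|_k\gm_1=\chi(\lambda)\,f|_k\gm_2$, with the exact case being $\lambda=1$. Your two-step organization (exact case via $\Gm_1(N)$, then reduction of the projective case to it via an explicit $\Gm_0(N)$ matrix) is just a mild repackaging of that same computation.
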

\begin{proof}
The points $[c_1:d_1],[c_2:d_2]\in \P^1( \Z/N\Z),$ are equal if and only if there exists
$\lambda \in (\Z/N\Z)^\times$ such that 
$(c_1, d_1) \equiv \lambda \cdot (c_2, d_2)\pmod{N}.$  
Supposing this to be the case, then 
$\gm_1 \gm_2^{-1}\equiv \bspm *&*\\0&\lambda \espm \pmod{N}.$  Hence $f|_k \gm_1 = \chi( \lambda) f|_k \gm_2,$ and the result follows, with the value of $c$ being $\chi(\lambda).$  If $( c_1 , d_1) \equiv (c_2, d_2) \pmod{N},$ then we are in the same situation, with $\lambda=1.$
\end{proof}
The next lemma has essentially the same proof.
\begin{lemma}
Let $N \ge 1$ be an integer and $f$ a 
Maass form of level $N$ and character $\chi \pmod{N}.$ 
Take $\gm_1= \bspm a_1&b_1\\c_1&d_1 \espm, \gm_2= \bspm a_2&b_2\\c_2&d_2 \espm \in SL(2, \Z).$
Define $g( \gm_i, \alpha),$ $(\alpha \in \Q^\times, i = 1,2)$ by \eqref{gaalpha}.  
Let $q \mid N$ be a prime.
If
$( c_1 , d_1) \equiv (c_2, d_2) \pmod{q^{v_q(N)}},$
then $W_q(g(\gm_1, \alpha)) = W_q(g(\gm_2, \alpha))$ for all $\alpha \in \Q^\times.$  If there exists 
$[c_1:d_1]=[c_2: d_2]$ in $\P^1( \Z/q^{v_q(N)}\Z),$ then there is a constant $c \in \C$ such that
$W_q(g(\gm_1, \alpha)) = cW_q(g( \gm_2, \alpha))$
for all $\alpha \in \Q^\times.$
\end{lemma}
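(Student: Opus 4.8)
The plan is to follow the proof of the preceding lemma verbatim, localized at the single prime $q.$ Put $\delta := \gm_2 \gm_1^{-1} \in SL(2, \Z),$ so that $\gm_1^{-1} = \gm_2^{-1}\delta.$ Since $W_f = \prod_v W_v,$ the value $W_q(g(\gm_i, \alpha))$ depends only on the $q$-component of $g(\gm_i, \alpha),$ and from \eqref{gaalpha} that component is $i_q\!\left(\bspm \alpha&\\&1 \espm \gm_i^{-1}\right).$ Hence
$$
g(\gm_1, \alpha)_q \;=\; i_q\!\left(\bspm \alpha&\\&1 \espm \gm_2^{-1}\right) i_q(\delta) \;=\; g(\gm_2, \alpha)_q\, i_q(\delta),
$$
so everything reduces to understanding $i_q(\delta)$ together with the right-translation behaviour of $W_q.$

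First I would compute the bottom row of $\delta$ modulo $q^{v_q(N)}.$ Writing $\gm_1^{-1} = \bspm d_1&-b_1\\ -c_1 & a_1\espm$ and using $a_1 d_1 - b_1 c_1 = 1,$ the bottom row of $\delta = \gm_2\gm_1^{-1}$ is $(c_2 d_1 - d_2 c_1,\ a_1 d_2 - b_1 c_2).$ Under the hypothesis $[c_1:d_1] = [c_2:d_2]$ in $\P^1(\Z/q^{v_q(N)}\Z)$ there is a unit $\mu \in (\Z/q^{v_q(N)}\Z)^\times$ with $(c_2, d_2) \equiv \mu(c_1, d_1) \pmod{q^{v_q(N)}};$ substituting gives $c_2 d_1 - d_2 c_1 \equiv 0$ and $a_1 d_2 - b_1 c_2 \equiv \mu(a_1 d_1 - b_1 c_1) = \mu \pmod{q^{v_q(N)}}.$ Thus $i_q(\delta) \in GL(2, \Z_q),$ its bottom-left entry lies in $q^{v_q(N)}\Z_q,$ and its bottom-right entry reduces to $\mu;$ that is, $i_q(\delta)$ lies in the local congruence subgroup at $q$ cut out by $q^{v_q(N)},$ and it lies in the $\Gm_1$-analogue of that subgroup exactly when $\mu \equiv 1,$ which is the case under the stronger hypothesis $(c_1, d_1) \equiv (c_2, d_2) \pmod{q^{v_q(N)}}.$

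Finally I invoke the equivariance built into the factorization $W_f = \prod_v W_v$ together with the conventions of \cite{GHL}: $W_q$ is equivariant under right translation by the local congruence subgroup at $q$ against the $q$-component of (the idelic lift of) the nebentypus $\chi,$ and that character is trivial on the corresponding $\Gm_1$-analogue. Applying this to the displayed identity $g(\gm_1,\alpha)_q = g(\gm_2,\alpha)_q\, i_q(\delta)$ produces a constant $c$ — namely the value of that character on $i_q(\delta),$ which depends only on $\mu$ and not on $\alpha$ — with $W_q(g(\gm_1, \alpha)) = c\, W_q(g(\gm_2, \alpha))$ for all $\alpha \in \Q^\times,$ and $c = 1$ in the case $\mu \equiv 1.$ The only step with genuine content is this last one, the local shadow of the relation $f|_k \gm_0 = \chi(\lambda) f$ used in the preceding lemma; once the correspondence between $\chi$ and its local components is pinned down as in \cite{GHL} it is immediate, and the one point worth double-checking is that the bottom-left entry of $\delta$ really is divisible by $q^{v_q(N)},$ which is the congruence $c_2 d_1 - d_2 c_1 \equiv 0$ recorded above.
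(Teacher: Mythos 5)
Your proof is correct and follows exactly the route the paper intends: the paper simply remarks that this lemma "has essentially the same proof" as the preceding one (where $\gm_1\gm_2^{-1}\equiv\bspm *&*\\0&\lambda\esm$ mod $N$ gives $f|_k\gm_1=\chi(\lambda)f|_k\gm_2$), and your argument is precisely the localization of that computation at $q$, with the constant $c$ arising as the value of the local component of $\widetilde\chi_{_{\text{idelic}}}$ on $i_q(\gm_2\gm_1^{-1})\in K_0(N)_q$. The bottom-row computation and the reduction to right $K_0$-equivariance of $W_q$ are both sound.
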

In order to state the next proposition, it 
is convenient to introduce a few more definitions.
\begin{definition}
For $N\ge 1$ and integer, define the set 
$X_N = \left\{ (c, d) \in (\Z/N\Z)^2 \;|\; \langle c, d \rangle  = \Z/N\Z\right\}.$
Let $G_N$ be the subgroup of
$\Q^\times$ generated by  the primes which do not divide $N.$  
 Define a homomorphism $r_{N}: G_{N} \to (\Z/N\Z)^\times$ by mapping $m/n$ to $m\bar n$ for all $m, n \in \Z$ with $\gcd(m,n) =\gcd(m,N)=\gcd(n,N) = 1,$ where $\bar n$ denotes the inverse of $n$ in $(\Z/N\Z)^\times.$
For $\beta \in G_N$ and $x= (c,d) \in X_N$
set $\beta\cdot x= (r_N( \beta^{-1})c,d).$
If $\gamma =\bspm a&b\\ c&d \espm \in SL(2, \Z),$ define $x_N(\gamma) \in X_N$  to 
be $(c,d)$ taken modulo $N.$  If 
$f$ is a Maass form of level $N,$ 
define $B_f(x, \alpha)$ for $x\in X_N$ and $\alpha \in \Q^\times,$ to equal $B_f(\gm, \alpha)$ for any $\gm \in SL(2, \Z)$ with $x_N( \gm) = x.$  
\end{definition}
\begin{proposition}
 Let $N \ge 1$ be an integer and let $f$ 
 be a Maass form of level $N$ such that 
 $W_f(g) = \prod_v W_v(g_v)$ for all 
 $g = \{ g_v\} \in GL(2, \A).$ 
  For $\gm =\bspm a&b\\c&d \espm \in SL(2, \Z),$ and $\alpha \in \Q^\times$ 
 define $g(\gm, \alpha)$ by \eqref{gaalpha}.
 Define 
 $N_1$ and $B_{f,N_1}$ as in definition \ref{d:B_N_1}.  
 Fix $\alpha_0\in \Q^\times$ and $\gm'=\bspm a'&b'\\ c' & d' \espm  \in SL(2, \Z)$ such that 
 $$
 (c', d') \equiv (c,d),  
 \pmod{q^{v_q(N)}}, \qquad q \mid N_1,
 \qquad \qquad
 (c',d') \equiv (0,1), 
 \pmod{q^{v_q(N)}}, \qquad q\mid N, \; q\nmid N_1.
 $$
Set $x_0 = x_N(\gm').$
Then,
for $\beta\in G_{N_1},$ 
we have 
$B_{f,N_1}(\gm, \alpha_0  \beta ) 
=B_{f,N_1}( \beta \cdot x_0, \alpha_0)
= B_f(\beta \cdot x_0, \alpha_0).
$   
 \label{p:bN1(gm, ab)=B_f(b.gm,a)}
 \end{proposition}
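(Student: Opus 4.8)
The plan is to treat the two equalities separately; the first is the substance of the proposition, while the second is a bookkeeping consequence of \eqref{e:baalpha=prod}.

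For the first equality, observe that both $B_{f,N_1}(\gm,\alpha_0\beta)$ and $B_{f,N_1}(\beta\cdot x_0,\alpha_0)$ are, by Definition \ref{d:B_N_1}, products of the local factors $W_q\big(g(-,-)\big)$ over $q\mid N_1$, and that (as recorded in the last of the lemmas above) $W_q\big(g(\gm'',\alpha)\big)$ depends only on the bottom row of $\gm''$ modulo $q^{v_q(N)}$. Since for any $\gm''\in SL(2,\Z)$ with $x_N(\gm'')=\beta\cdot x_0$ that bottom row is $\big(r_N(\beta^{-1})c',d'\big)\equiv\big(r_N(\beta^{-1})c,d\big)\pmod{q^{v_q(N)}}$ when $q\mid N_1$, it suffices to fix such a prime $q$ and such a matrix $\gm''$ and to prove
\[
W_q\!\left(\bpm\alpha_0\beta&\\&1\epm\gm^{-1}\right)=W_q\!\left(\bpm\alpha_0&\\&1\epm(\gm'')^{-1}\right).
\]
Here one uses that $\beta\in G_{N_1}$ and $q\mid N_1$ force $v_q(\beta)=0$, so that $\beta,\beta^{-1}\in\Z_q^\times$ and $r_N(\beta^{-1})\equiv\beta^{-1}\pmod{q^{v_q(N)}}$. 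Writing $n=v_q(N)$, I would argue inside $GL(2,\Q_q)$ in two steps. \emph{(i)} Starting from $\bpm\alpha_0\beta&\\&1\epm\gm^{-1}=\bspm\alpha_0\beta d&-\alpha_0\beta b\\-c&a\espm$, right-multiply by $\bspm\beta^{-1}&\\&1\espm$; this element lies in the congruence subgroup $K_0(q^n)=\{k\in GL(2,\Z_q):k\equiv\bspm*&*\\0&*\espm\pmod{q^n}\}$, under which the local newvector transforms by $\widetilde\chi_q$ of the lower-right entry (the normalization behind Lemma \ref{lem:5conditionsForMultiplicativeAtv}(4)), and since that entry is $1$ the value of $W_q$ is unchanged; the product equals $\bpm\alpha_0&\\&1\epm X$ with $X:=\bspm d&-\beta b\\-\beta^{-1}c&a\espm\in SL(2,\Z_q)$. \emph{(ii)} A direct computation shows that the bottom row of $\gm''X$ is $\equiv(0,1)\pmod{q^n}$: writing $(c'',d'')$ for the bottom row of $\gm''$, one has $(c'',d'')\equiv(\beta^{-1}c,d)\pmod{q^n}$, hence $(c'',d'')X\equiv\big(\beta^{-1}cd-d\beta^{-1}c,\ -\beta^{-1}c\beta b+da\big)=(0,ad-bc)=(0,1)$; therefore $\gm''X$ lies in $K_1(q^n)=\{k\in GL(2,\Z_q):k\equiv\bspm*&*\\0&1\espm\pmod{q^n}\}$, which fixes the local newvector, and since $X=(\gm'')^{-1}(\gm''X)$ we get $W_q\big(\bpm\alpha_0&\\&1\epm X\big)=W_q\big(\bpm\alpha_0&\\&1\epm(\gm'')^{-1}\big)$. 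Combining \emph{(i)} and \emph{(ii)} gives the displayed identity, and multiplying over $q\mid N_1$ yields $B_{f,N_1}(\gm,\alpha_0\beta)=B_{f,N_1}(\beta\cdot x_0,\alpha_0)$.

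For the second equality, by \eqref{e:baalpha=prod} and Definition \ref{d:B_N_1} the ratio $B_f(\beta\cdot x_0,\alpha_0)/B_{f,N_1}(\beta\cdot x_0,\alpha_0)$ is the product over the places $v\nmid N_1$ of the corresponding local factors of \eqref{e:baalpha=prod} evaluated at $g(\gm'',\alpha_0)$. By the hypothesis on $\gm'$, the first coordinate of $\beta\cdot x_0$ is $r_N(\beta^{-1})c'\equiv 0\pmod{q^{v_q(N)}}$ at every $q\mid N$ with $q\nmid N_1$, so $(\gm'')^{-1}$ lies in $GL(2,\Z_p)$ for every $p\nmid N_1$ and in $K_1\big(p^{v_p(N)}\big)$ at those $p\mid N$ with $p\nmid N_1$; hence each of these local factors reduces to the one attached to the cusp at infinity, and their product over $v\nmid N_1$ collapses by the normalization $A(\infty,1)=1$ recalled around \eqref{MultRels for A(infty, *)}. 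This gives $B_{f,N_1}(\beta\cdot x_0,\alpha_0)=B_f(\beta\cdot x_0,\alpha_0)$.

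I expect step \emph{(i)} to be the main obstacle: one must verify that right-multiplication by $\bspm\beta^{-1}&\\&1\espm$ contributes trivially to $W_q$, which hinges both on its lower-right entry being $1$ and on the precise normalization of the local newvector behind Lemma \ref{lem:5conditionsForMultiplicativeAtv}(4). Had the twist landed instead on the upper-left entry, a factor $\widetilde\chi_q(\beta)^{\pm1}$ would survive, and since $\prod_{q\mid N_1}\widetilde\chi_q(\beta)$ is not in general $1$ the first equality would break down. A secondary nuisance is keeping the chosen representative $\gm''$ compatible with the definition of $N_1$, so that the middle term is literally a product over the same set $\{q\mid N_1\}$, and likewise making the normalization in the second equality explicit.
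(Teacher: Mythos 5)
Your argument is correct and is essentially the paper's own proof: both rest on pulling out the diagonal unit $\bspm \beta^{\pm 1}&\\&1\espm$, which lies in $K_0$ with trivial $\widetilde\chi$-value, and then reducing to a bottom-row congruence check — your steps \emph{(i)}--\emph{(ii)} simply unpack (and in fact correct the orientation of) the single matrix identity the paper writes down, and your treatment of the second equality via the congruence $(c',d')\equiv(0,1)$ at $q\mid N,\ q\nmid N_1$ matches the paper's. The one caveat, which you share with the paper's proof (it too only verifies that the unit parts $\alpha_0|\alpha_0|_q$ contribute trivially), is that the final equality $B_{f,N_1}=B_f$ needs each factor $W_v$ at $v\nmid N_1$ to equal $1$ at $\alpha_0$ itself, which your appeal to $A(\infty,1)=1$ does not supply when $v_p(\alpha_0)\ne 0$ for some $p\nmid N_1$.
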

\begin{proof}
Fix $\gm''_\beta$ such that $x_N( \gm''_\beta)= \beta \cdot x_0.$ 
To be specific take 
$$
\gm''_\beta = \bpm a''&b''\\c''&d'' \epm \equiv
\bpm 
a' & r_N(\beta) b'\\ r_N( \beta^{-1}) c'& d'  
\epm 
\pmod N.
$$
We first show that $B_{f,N_1}( \gamma, \alpha_0\beta) = B_{f,N_1}( \gm''_\beta, \alpha_0).$
From the definition of $B_{f,N_1},$ it
suffices to prove that $W_q(g( \gm, \alpha_0\beta) ) = W_q( g( \gm''_\beta, \alpha_0))$ for all $q \mid N_1.$   
 And this is clear, since 
it follows directly from the definitions that 
$$i_q\left( \bpm \alpha_0\beta\cdot |\alpha_0\beta|_q &0\\0&1 \epm \gm^{-1} \right)
= i_q\left( \bpm \alpha_0 |\alpha_0|_q &0\\0&1 
\epm \gm_\beta'' \right)\cdot i_q\left( \bpm \beta &\\&1\epm \right),$$
and it is clear that $i_q\left( \bspm \beta &\\&1\espm \right)$ lies in the kernel of $\widetilde \chi_{_{\text{idelic}}}$ in $K_0(N).$ 

To prove that $B_{f,N_1}( \gm_\beta'', \alpha_0) = B_f( \gm_\beta'', \alpha_0),$ 
we simply note that at all primes not dividing $N_1,$ the matrix 
$i_q\left( \bspm \alpha_0 |\alpha_0|_q &0\\0&1 
\espm \gm_\beta'' \right)$ is in the kernel of $\widetilde \chi_{_{\text{idelic}}}$ in $K_0(N),$ so that $W_q\left( i_q\left( \bspm \alpha_0 |\alpha_0|_q &0\\0&1 
\espm \gm_\beta'' \right)\right) =1.$
\end{proof}
\begin{corollary}\label{cor: BsubN1 factors through rN1}
Keep the notation from the statement of proposition 
\ref{p:bN1(gm, ab)=B_f(b.gm,a)}.  The function 
$\beta \mapsto B_{f,N_1}( \gm, \alpha\beta)$ 
factors through $r_{N_1}.$
\end{corollary}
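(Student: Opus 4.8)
The plan is to obtain the corollary as an immediate consequence of Proposition~\ref{p:bN1(gm, ab)=B_f(b.gm,a)}. To say that $\beta\mapsto B_{f,N_1}(\gm,\alpha\beta)$ factors through $r_{N_1}$ is, concretely, to say that $r_{N_1}(\beta_1)=r_{N_1}(\beta_2)$ implies $B_{f,N_1}(\gm,\alpha\beta_1)=B_{f,N_1}(\gm,\alpha\beta_2)$ (since $r_{N_1}$ is a surjective homomorphism onto $(\Z/N_1\Z)^\times$, the resulting well-defined map is then automatically defined on all of $(\Z/N_1\Z)^\times$). By the proposition, $B_{f,N_1}(\gm,\alpha\beta)=B_{f,N_1}(\beta\cdot x_0,\alpha)$ for every $\beta\in G_{N_1}$, so it suffices to check that $B_{f,N_1}(\beta\cdot x_0,\alpha)$ is unchanged when $\beta$ is replaced by any $\beta'\in G_{N_1}$ with $r_{N_1}(\beta')=r_{N_1}(\beta)$.

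First I would unwind $B_{f,N_1}$: by Definition~\ref{d:B_N_1} it is the product over the primes $q\mid N_1$ of the factors $W_q(g(\,\cdot\,,\alpha))$, and by the lemma stating that $W_q(g(\gm,\alpha))$ depends only on the bottom row of $\gm$ modulo $q^{v_q(N)}$, it is enough to show that $\beta\cdot x_0$ and $\beta'\cdot x_0$ have bottom rows congruent modulo $q^{v_q(N)}$ for each $q\mid N_1$. Write $x_0=(c',d')$; the action replaces $c'$ by $r_{N_1}(\beta^{-1})c'$ and leaves $d'$ fixed, and at primes $q\mid N$ with $q\nmid N_1$ the first coordinate of $x_0$ is $\equiv 0$, so the action is trivial there. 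The numerical point is that for $q\mid N_1$ one has $v_q(c')=v_q(c)$ — because $v_q(N_1)=v_q(N)-v_q(c)>0$ forces $v_q(c)<v_q(N)$, and $(c',d')\equiv(c,d)\pmod{q^{v_q(N)}}$ — hence $v_q(c')+v_q(N_1)=v_q(N)$. Now pick integer representatives $u$ of $r_{N_1}(\beta^{-1})$ and $u'$ of $r_{N_1}((\beta')^{-1})$; the hypothesis $r_{N_1}(\beta')=r_{N_1}(\beta)$ gives $q^{v_q(N_1)}\mid u'-u$ for all $q\mid N_1$, and multiplying by $c'$ yields $q^{v_q(N)}\mid (u'-u)c'$, i.e.\ $u'c'\equiv uc'\pmod{q^{v_q(N)}}$. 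Thus the bottom rows of $\beta\cdot x_0$ and $\beta'\cdot x_0$ agree modulo $q^{v_q(N)}$ for every $q\mid N_1$, the corresponding $W_q$-factors coincide, and $B_{f,N_1}(\beta\cdot x_0,\alpha)=B_{f,N_1}(\beta'\cdot x_0,\alpha)$, which together with the proposition is the claim.

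I do not anticipate a real obstacle here; the argument is essentially bookkeeping. The one point that needs care is keeping the two moduli straight — $q^{v_q(N)}$ controls $W_q$ (and hence $B_{f,N_1}$), whereas $q^{v_q(N_1)}$ controls $r_{N_1}$ — and verifying that their exponents differ by exactly $v_q(c')$, so that the unit ambiguity in $r_{N_1}(\beta)$ is precisely absorbed after multiplying by $c'$. One should also note that the action $\beta\cdot x_0$ is legitimately read through $r_{N_1}$: although the definition of $\cdot$ is phrased via $r_N$, each $\beta\in G_{N_1}$ is coprime to $N_1$ and the coordinates of $x_0$ at primes dividing $N$ but not $N_1$ vanish, so only $r_{N_1}(\beta)$ ever intervenes.
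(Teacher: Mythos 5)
Your proposal is correct and follows the same route as the paper: the paper's proof simply observes that $\beta\mapsto\beta\cdot x_0$ factors through $r_{N_1}$ and cites Proposition \ref{p:bN1(gm, ab)=B_f(b.gm,a)}, declaring the factorization ``clear from the definitions.'' What you have written is exactly that argument with the bookkeeping made explicit --- in particular the verification that $v_q(c')+v_q(N_1)=v_q(N)$ for $q\mid N_1$, so that the ambiguity modulo $q^{v_q(N_1)}$ in a representative of $r_{N_1}(\beta)$ is absorbed upon multiplication by $c'$ --- which is precisely the step the paper leaves to the reader.
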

\begin{proof}
It's clear from the definitions of $x_0,$ $N_1,$ 
and the action $\cdot$ that the function 
$\beta \mapsto \beta \cdot x_0$ factors through $r_{N_1},$ so this is immediate from proposition \ref{p:bN1(gm, ab)=B_f(b.gm,a)}. 
\end{proof}
The new language permits us to state
theorem 35 
of \cite{GHL}
in a simpler way.  This theorem
says 
 that $B_f(\gm, \alpha)$ is multiplicative whenever 
 the orbit of $x_N(\gm)$ under the action 
 of $G_N$ is a single point, or 
 maps to a single point in 
  $\P^1(\Z/N\Z).$ 
 We would like to prove something in the direction of a converse to this statement.   
However, the discussion of the case $N=8, \fa=\frac 12$ in section 6 of \cite{GHL} reveals that, in general, one must be prepared to work with orbits not of $(\Z/N\Z)^\times,$ but only of a subgroup corresponding to values of $n$
such that $B_f(I_2, n) \ne 0.$  (Cf. also corollary \ref{c:multveCond}.)

In order to prove an approximate
converse,
 we need to play two properties of $B_{f,N_1}(\gm, \cdot)$ off of one another: 
first, its restriction to $G_{N_1}$ factors through $r_{N_1},$ and second, if $B_f(\gm, \cdot)$
is multiplicative, the $B_{f,N_1}(\gm, \cdot)$ 
satisfies a weak multiplicativity property as in corollary \ref{c:multveCond}.
We first consider the simplest case, when
$B_{f,N_1}(\gm, \cdot)$ is actually multiplicative.  (This will be the 
case, e.g., if $B_f(I_2, p)$ is nonzero more than half of the time.)

\begin{lemma}\label{l:multve+factors=>DirChar}
Let $h: G_N \to \C$ be a nonzero function 
which factors through $r_N.$  Suppose further that 
$h( \alpha_1 \beta_1) h( \alpha_2\beta_2) = h( \alpha_1\beta_2)h(\alpha_2\beta_1)$ 
for all $\alpha_1, \alpha_2, \beta_1, \beta_2 \in G_N$ such that 
$$|\alpha_1|_p\text{ or } |\alpha_2|_p \ne 1 \implies |\beta_1|_p = |\beta_2|_p =1 \qquad \forall \text{ primes }p.$$
Then there is a Dirichlet character $\psi \pmod{N}$ 
and a complex number $c$ 
such that $h( \alpha) = c \psi(r_N(\alpha))$ for each $\alpha \in G_N.$
\end{lemma}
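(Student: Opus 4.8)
The plan is to leverage the two hypotheses — that $h$ factors through $r_N$, and that $h$ satisfies the ``weak multiplicativity'' relation — to first reduce to a genuine multiplicativity statement on $(\Z/N\Z)^\times$, and then argue that a multiplicative function on a finite abelian group which never vanishes is (a scalar multiple of) a character. First I would observe that since $h$ factors through $r_N$, and $r_N : G_N \to (\Z/N\Z)^\times$ is surjective (any residue class mod $N$ contains a positive integer coprime to $N$), there is a well-defined function $H : (\Z/N\Z)^\times \to \C$ with $h(\alpha) = H(r_N(\alpha))$. It is nonzero because $h$ is. The goal is to show $H$ is a scalar multiple of a Dirichlet character.

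Next I would translate the weak multiplicativity relation into a statement about $H$. Given $u_1, u_2, v_1, v_2 \in (\Z/N\Z)^\times$, I want to lift them to $\alpha_1,\alpha_2,\beta_1,\beta_2 \in G_N$ with $r_N(\alpha_i)=u_i$, $r_N(\beta_j)=v_j$, \emph{and} satisfying the coprimality condition ``$|\alpha_1|_p$ or $|\alpha_2|_p \neq 1 \implies |\beta_1|_p = |\beta_2|_p = 1$''. This is an exercise in the Chinese Remainder Theorem / Dirichlet's theorem on primes in arithmetic progressions: choose $\alpha_1,\alpha_2$ to be (products of) primes in suitable residue classes mod $N$, all distinct from each other, and then choose $\beta_1,\beta_2$ to be products of primes avoiding those finitely many primes. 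Since $h$ depends only on the image under $r_N$, which is determined by the prescribed residue classes, such lifts give
\begin{equation*}
H(u_1 v_1)\, H(u_2 v_2) = H(u_1 v_2)\, H(u_2 v_1) \qquad \text{for all } u_1,u_2,v_1,v_2 \in (\Z/N\Z)^\times.
\end{equation*}

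With this relation in hand, the argument concludes as follows. Taking $u_2 = v_2 = 1$ gives $H(u_1 v_1) H(1) = H(u_1) H(v_1)$, so if $H(1) \neq 0$ then $\psi := H/H(1)$ is a homomorphism $(\Z/N\Z)^\times \to \C^\times$ — i.e.\ a Dirichlet character mod $N$ — and $h(\alpha) = H(1)\,\psi(r_N(\alpha))$, which is the claim with $c = H(1)$. So it remains to rule out $H(1) = 0$. If $H(1) = 0$, pick $u_0$ with $H(u_0) \neq 0$; then setting $u_1 = u_0, v_1 = u_0^{-1}, u_2 = v_2 = u_0$ in the relation gives $H(1) H(u_0^2) = H(u_0)^2$, forcing $H(u_0) = 0$, a contradiction. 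Hence $H(1) \neq 0$ and we are done.

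The main obstacle is the middle step: producing the lifts $\alpha_i, \beta_j \in G_N$ that simultaneously hit prescribed residues under $r_N$ and are pairwise ``coprime'' in the required sense. One must be a little careful that $G_N$ is the group \emph{generated by} primes not dividing $N$ (so elements are ratios, and $r_N$ is defined on ratios), and that the coprimality hypothesis is a constraint only at primes where some $\alpha_i$ is non-unit; choosing the $\alpha_i$ supported on a fixed finite set of primes and the $\beta_j$ supported on a disjoint set handles this cleanly. Everything else is the standard ``non-vanishing multiplicative function on a finite group is a character'' argument.
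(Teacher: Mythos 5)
Your proof is correct and follows essentially the same route as the paper: both use Dirichlet's theorem to lift residue classes to primes not dividing $N$ with disjoint supports, so that the weak multiplicativity hypothesis becomes genuine multiplicativity on $(\Z/N\Z)^\times$; the paper simply anchors the normalization at an $\alpha_0$ with $h(\alpha_0)\ne 0$ from the outset, which avoids your separate case $H(1)=0$. (One trivial slip: the substitution $u_1=u_0,\,v_1=u_0^{-1},\,u_2=v_2=u_0$ yields only the tautology $H(1)H(u_0^2)=H(u_0^2)H(1)$; to get $H(1)H(u_0^2)=H(u_0)^2$ take $u_1=v_1=1$, $u_2=v_2=u_0$, or equivalently set $u=v=u_0$ in your already-derived relation $H(uv)H(1)=H(u)H(v)$.)
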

\begin{proof}
Fix $\alpha_0 \in G_N$ such that $h(\alpha_0) \ne 0,$ and for $m \in (\Z/N\Z)^\times,$ define $\psi(m) = h( \alpha \alpha_0)/h( \alpha_0),$ where $\alpha \in G_N$ is any element satisfying $r_N( \alpha ) = m.$  For any $m_1, m_2 \in (\Z/N\Z)^\times,$ we can choose 
primes $p_1, p_2$ not dividing $N,$ such 
that $v_{p_i}( \alpha_0) = 0,$ 
and $r_{N}(p_i)=m_i,\; (i=1,2).$
Hence
$$
\psi( m_1 m_2) = 
\frac{ h( \alpha_1\alpha_2 \alpha_0)}{h( \alpha_0)}
= \frac{ h( \alpha_1\alpha_2 \alpha_0)h( \alpha_0)}{h( \alpha_0)^2}
= \frac{ h( \alpha_1 \alpha_0)h(\alpha_2 \alpha_0)}{h( \alpha_0)^2}
= \psi( m_1) \psi(m_2).
$$
It follows that $\psi$ is a character of $(\Z/N\Z)^\times.$
\end{proof}

\section{Main results when $B_{f,N_1}(\gm, \cdot)$ is multiplicative}

\begin{theorem}\label{T:necCond(mu=0)-simple}
Let $N \ge 1$ be an integer and let 
 $f$ be a  Maass form of level $N$ and character $\chi \pmod{N}$ such that 
 $W_f(g) = \prod_v W_v(g_v)$ for all 
 $g = \{ g_v\} \in GL(2, \A).$ 
 Fix $\gm = \bspm a&b\\c&d \espm \in SL(2, \Z)$ and 
 define $N_1$ and $B_{f,N_1}(\gm, \cdot) $ as in definition \ref{d:B_N_1}. Assume that $B_{f,N_1}(\gm, \cdot)$ 
 is multiplicative,
 and that, for each residue class modulo $N_1$ there is an integer in that class such that $B_f(I_2,\, n ) \ne 0.$  Then the cusp parameter
 $\mu_{\fa, \chi}$ associated to the cusp $\fa= \gm\cdot \infty$ relative to the character $\chi$ is trivial.
\end{theorem}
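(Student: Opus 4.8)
\medskip

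\noindent\emph{Proof strategy.} I would argue by contradiction: assume $\mu_{\fa,\chi}\neq 0$. Since $\mu_{\fa,\chi}\in(0,1)\cap\Q$, some prime $q_0$ occurs in its denominator; write $v_{q_0}(\mu_{\fa,\chi})=-e_0$ with $e_0\geq 1$. The first point to establish is that $q_0\mid N_1$ and $e_0\leq v_{q_0}(N_1)$. This I would read off from the description of the cusp parameter in \cite{GHL}: the local cusp parameter at a prime $q\mid N$ vanishes as soon as $q$ satisfies one of the conditions (3), (4), (5) of Lemma \ref{lem:5conditionsForMultiplicativeAtv}, and by Definition \ref{d:B_N_1} those are exactly the primes $q\mid N$ with $q\nmid N_1$; the numerical bound on $e_0$ comes from the same source. (If a self-contained argument is preferred, this can be reproved by a computation in $SL(2,\Z/N\Z)$ in the spirit of Proposition \ref{Gm1(M1)}.)

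Next, since $f\neq 0$ has a factorizable Whittaker function, \eqref{e:baalpha=prod} shows that $B_f(\gm,\cdot)$ is not identically zero, so I fix $\alpha^{*}\in\Q^{\times}$ with $B_f(\gm,\alpha^{*})\neq 0$. By Lemma \ref{l:bN1timesMultve} I write $B_f(\gm,\cdot)=B_{f,N_1}(\gm,\cdot)\cdot m$ with $m$ multiplicative. Then $B_{f,N_1}(\gm,\alpha^{*})\neq 0$, $m(\alpha^{*})\neq 0$ (so its archimedean factor and all of its local factors $m_p(v_p(\alpha^{*}))$ are nonzero), and $\alpha^{*}$ lies in the support $\tfrac{1}{m_\fa}(\Z+\mu_{\fa,\chi})$, i.e.\ $m_\fa\alpha^{*}-\mu_{\fa,\chi}\in\Z$; hence $v_{q_0}(m_\fa\alpha^{*})=v_{q_0}(\mu_{\fa,\chi})=-e_0$.

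Now I would analyse $h\colon G_{N_1}\to\C$, $h(\beta)=B_{f,N_1}(\gm,\alpha^{*}\beta)$. It factors through $r_{N_1}$ by Corollary \ref{cor: BsubN1 factors through rN1}, it is nonzero because $h(1)=B_{f,N_1}(\gm,\alpha^{*})\neq 0$, and it satisfies the hypothesis of Lemma \ref{l:multve+factors=>DirChar}: given $\beta_1,\beta_2,\gamma_1,\gamma_2\in G_{N_1}$ subject to the coprimality condition there, one may, using that $h$ factors through $r_{N_1}$, replace them by pairwise coprime positive integers in the same residue classes modulo $N_1$ which are also coprime to the numerator and denominator of $\alpha^{*}$, and the required identity then follows from the multiplicativity of $B_{f,N_1}(\gm,\cdot)$ on $\Q^{\times}$ (the trivial ``only if'' direction of the lemma preceding Corollary \ref{c:multveCond}). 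Therefore Lemma \ref{l:multve+factors=>DirChar} applies: there are a Dirichlet character $\psi\pmod{N_1}$ and a nonzero constant $c$ with $B_{f,N_1}(\gm,\alpha^{*}\beta)=c\,\psi(r_{N_1}(\beta))$ for every $\beta\in G_{N_1}$; in particular $B_{f,N_1}(\gm,\alpha^{*}\beta)\neq 0$ for all $\beta\in G_{N_1}$.

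It now suffices to exhibit $\beta\in G_{N_1}$ with $m(\alpha^{*}\beta)\neq 0$ and $m_\fa\alpha^{*}(\beta-1)\notin\Z$: for such $\beta$ one has $B_f(\gm,\alpha^{*}\beta)=B_{f,N_1}(\gm,\alpha^{*}\beta)\,m(\alpha^{*}\beta)\neq 0$, hence $m_\fa\alpha^{*}\beta-\mu_{\fa,\chi}\in\Z$, and subtracting $m_\fa\alpha^{*}-\mu_{\fa,\chi}\in\Z$ gives $m_\fa\alpha^{*}(\beta-1)\in\Z$, a contradiction. I would take for $\beta$ an integer $n$, coprime to $N$ and to the numerator and denominator of $\alpha^{*}$, with $B_f(I_2,n)\neq 0$ and $n\not\equiv 1\pmod{q_0^{e_0}}$. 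Recall that $B_f(I_2,\cdot)$ is multiplicative, by \eqref{MultRels for A(infty, *)} (together with the facts that $B_f(I_2,\alpha)=0$ for $\alpha\notin\Z$ and that the only sign dependence is the scalar $B_f(I_2,-1)/B_f(I_2,1)$). Since the local factors of $m$ at primes not dividing $N$ coincide with those of $B_f(I_2,\cdot)$ — both equal $W_{f,p}(\bigl(\begin{smallmatrix}\alpha&\\&1\end{smallmatrix}\bigr))$, $\gamma^{-1}$ being in $GL(2,\Z_p)$ — and $m(\alpha^{*})\neq 0$, a routine computation gives $m(\alpha^{*}n)=m(\alpha^{*})\cdot B_f(I_2,n)/B_f(I_2,1)\neq 0$; and $v_{q_0}\bigl(m_\fa\alpha^{*}(n-1)\bigr)=-e_0+v_{q_0}(n-1)<0$ because $n\not\equiv 1\pmod{q_0^{e_0}}$, so $m_\fa\alpha^{*}(n-1)\notin\Z$. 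This completes the contradiction.

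\smallskip

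The main obstacle is the construction of $n$ in the last step, and this is exactly where the second hypothesis is used. It provides, in every residue class modulo $N_1$, an integer with $B_f(I_2,\cdot)\neq 0$; one then passes from such an integer to a suitable ``unitary divisor'' coprime to $N$ and to $\alpha^{*}$, which still has $B_f(I_2,\cdot)\neq 0$ by multiplicativity, and keeps track of its sign (which, by \eqref{e:baalpha=prod}, is immaterial for the non-vanishing of $m(\alpha^*\cdot)$). The delicate step is that discarding those prime factors alters the residue modulo $q_0^{e_0}$ by an element of the finite subgroup $H\leq(\Z/q_0^{e_0}\Z)^{\times}$ generated by the primes dividing $N$ or the numerator/denominator of $\alpha^{*}$ but not dividing $N_1$; one therefore wants the target class modulo $N_1$ to reduce, modulo $q_0^{e_0}$, to an element outside $H$, which is possible unless $H=(\Z/q_0^{e_0}\Z)^{\times}$. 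That exceptional situation is confined to a short list of small configurations and must be dispatched separately: either the denominator of $\mu_{\fa,\chi}$ contains a second prime at which the whole argument can instead be run, or one verifies directly — a finite computation, of the kind underlying the sharper statement \ref{t:necCond2-simple-sharp} — that multiplicativity of $B_{f,N_1}(\gm,\cdot)$ together with its factoring through $r_{N_1}$ is already incompatible with $\mu_{\fa,\chi}$ being supported only on such a small power of $q_0$ while $q_0\mid N_1$.
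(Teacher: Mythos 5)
Your overall strategy is the same as the paper's: assume $\mu_{\fa,\chi}\neq 0$, combine Corollary \ref{cor: BsubN1 factors through rN1} with Lemma \ref{l:multve+factors=>DirChar} to show that $B_{f,N_1}(\gm,\alpha^{*}\beta)$ is a nonvanishing constant times a Dirichlet character in $r_{N_1}(\beta)$, and then exhibit a point where the complementary (multiplicative) factor is also nonzero but which violates the support condition $m_\fa\alpha-\mu_{\fa,\chi}\in\Z$. However, your execution has two genuine gaps, both in the part of the argument that actually produces the contradiction. First, the opening claim that every prime $q_0$ of the denominator of $\mu_{\fa,\chi}$ satisfies $q_0^{e_0}\mid N_1$ is asserted by citation but is false in general: when $q_0=2$ and $v_2(c)=v_2(N)-1>0$ (case (5) of Lemma \ref{lem:5conditionsForMultiplicativeAtv}) one has $2\nmid N_1$, yet the $2$-part of the cusp parameter can be $1/2$ (e.g.\ the cusp $\tfrac12$ of $\Gm_0(4)$ with the odd character mod $4$). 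Your argument is keyed to a prime of the denominator dividing $N_1$, and in such configurations there may be none; note also that for $e_0=1$, $q_0=2$ the requirement $n\not\equiv 1\pmod{q_0^{e_0}}$ is unsatisfiable for $n$ coprime to $N$, so this case cannot be repaired by the same device. The paper instead argues with the full denominator $s$ and the classes $r'\pmod{s}$ rather than a single chosen prime.

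Second, and more decisively, the construction of the witness $n$ is not completed. You need $n$ coprime to $N$ and to $\alpha^{*}$, lying outside a prescribed class modulo $q_0^{e_0}$, with $B_f(I_2,n)\neq 0$; the hypothesis only supplies integers in prescribed classes modulo $N_1$, and your passage to a unitary divisor shifts the class modulo $q_0^{e_0}$ by an element of an uncontrolled subgroup $H$. You concede that when $H=(\Z/q_0^{e_0}\Z)^{\times}$ the construction fails and must be ``dispatched separately,'' but you neither delimit those exceptional configurations nor carry out the separate argument; an unspecified ``finite computation'' is not a proof. The paper's proof organizes this step differently, working with $B^{N_1}(\gm, r'/(sm_\fa))$ for $r'$ running over a full residue class modulo $N_1$ and comparing local factors at $p\nmid N$ directly with those of $B_f(I_2,\cdot)$, so that no stripping of prime factors (and hence no subgroup $H$) ever enters. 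As written, your proposal is a plausible sketch of the right strategy with the decisive final step missing.
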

\begin{proof}
If $\mu_{\fa, \chi} = \frac{r}{s} \ne 0,$ then 
$B_f( \gm , \alpha)$ is supported on rational numbers of the form, $\frac{r'}{sm_\fa}$ 
with $r' \equiv r \pmod{s}.$  
Now,  
$$
e^{2\pi i \mu_{\fa, \chi}} = \wt\chi\left( \gm\bpm 1&m_\fa\\ &1 \epm \gm^{-1} \right)  
$$
and $m_\fa$ is the least positive integer such that
$$
\gm \bpm 1&m_\fa\\ &1 \epm \gm^{-1} 
= \bpm 1-acm_\fa&a^2m_\fa \\ -c^2 m_\fa & 1+acm_\fa \epm \in \Gm_0(N).
$$
Hence $$m_\fa = \prod_{q\mid N}q^{\max(0, v_q(N)-2v_q(c)},$$
so that  $1+acm_\fa$ lies in 
\begin{equation} \label{set of k equiv 1 mod N2}\left\{ k\in \Z: k \equiv 1 \pmod{\prod_{q\mid N} q^{\max(v_q(c), v_q(N)-v_q(c))}}\right\}.\end{equation}
The image of \eqref{set of k equiv 1 mod N2} in $\Z/N\Z^\times$ has $N_3:= \prod_{q\mid N} q^{\min(v_q(c), v_q(N)-v_q(c))}$ elements.  
It follows that $s$ is a divisor of $N_3.$  In particular, every prime divisor of $s$ divides $N.$  

  Let $$
  B_{f,p}(\gm, \alpha) = W_{f,p}\left(\bpm\alpha & \\ & 1\epm\gamma^{-1}\right), \qquad 
  B_f^{N_1}( \gm, \alpha)=  B_f(I_2, \text{sign}(\alpha))
  \cdot \prod_p B_{f,p}( \gm, \alpha).$$ 
  Then $B_f( \gm , \alpha) = B_{f,N_1}( \gm, \alpha) B^{N_1}( \gm, \alpha).$ 
  Recall that for each prime $p \nmid N_1,$ 
$B_{f,p}(\gm,\frac{r'}{sm_\fa})$ depends only 
on $v_p(\frac{r'}{sm_\fa}),$ and that for $p\nmid N$
it is independent of $\gm.$

  Assuming $B_{f,N_1}( \gm, \alpha)$ is multiplicative,  it follows from
  corollary \ref{cor: BsubN1 factors through rN1}
and 
  lemma \ref{l:multve+factors=>DirChar}
that $$B_{f,N_1}(\gm, \frac{r'}{sm_\fa}) 
= \psi( r') B_{f,N_1}( \gm, \frac{1}{sm_\fa})
\qquad ( \forall,r' \in \Z, \; \gcd( r', N_1)=1).$$
Hence $B^{N_1}( \gm, \frac{r'}{sm_\fa})=0$
for all $r' \in \Z$ with $\gcd(r', N_1) =1$
and $r'\not\equiv r \pmod{s}.$
But this is inconsistent with our assumption regarding $B_f(I_2, n).$ 
\end{proof}

\begin{theorem}\label{t:necCond2-simple}
Let $N \ge 1$ be an integer and let 
 $f$ be a  Maass form of level $N$ and character $\chi \pmod{N}$ such that 
 $W_f(g) = \prod_v W_v(g_v)$ for all 
 $g = \{ g_v\} \in GL(2, \A).$ 
 Fix $\gm = \bspm a&b\\c&d \espm \in SL(2, \Z)$ and 
 define  $B_{f,N_1}(\gm, \alpha), \;(\alpha \in \Q^\times)$ as in definition \ref{d:B_N_1}. 
 Set $M=\gcd(N, 2cd).$  
 Assume that $B_{f,N_1}(\gm, \cdot)$ 
 is multiplicative.
 Then $N/M$ is a divisor of $24.$  In 
 particular, $N_1$ is not divisible by any 
 prime greater than $3.$  
\end{theorem}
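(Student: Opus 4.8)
The plan is to convert the multiplicativity hypothesis into a Dirichlet‑character identity on the group $G_{N_1}$, confront that identity with the explicit formula for $W_q(g(\gm,\alpha))$ at the primes $q\mid N_1$ in order to extract a constraint on the triple $(v_q(N),v_q(c),v_q(d))$ at each such prime, and finally repackage those prime‑by‑prime constraints as the divisibility $N/M\mid 24$, deducing the statement about $N_1$ as an afterthought.

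First, the reduction to a character. Since $f\ne 0$, $B_f(\gm,\cdot)$ is not identically zero, so by Lemma~\ref{l:bN1timesMultve} neither is $B_{f,N_1}(\gm,\cdot)$; writing out \eqref{e:multiplicativity} and using that $h_p(0)=1$ for almost all $p$ one sees in fact that $B_{f,N_1}(\gm,\beta)\ne 0$ for some $\beta\in G_{N_1}$. Full multiplicativity of $B_{f,N_1}(\gm,\cdot)$ forces the identity $h(\alpha_1\beta_1)h(\alpha_2\beta_2)=h(\alpha_1\beta_2)h(\alpha_2\beta_1)$ for coprime arguments, i.e.\ the hypothesis of Lemma~\ref{l:multve+factors=>DirChar}, and by Corollary~\ref{cor: BsubN1 factors through rN1} the restriction of $B_{f,N_1}(\gm,\cdot)$ to $G_{N_1}$ factors through $r_{N_1}$; hence there are a Dirichlet character $\psi\pmod{N_1}$ and $c\in\C^\times$ with $B_{f,N_1}(\gm,\beta)=c\,\psi(r_{N_1}(\beta))$ for all $\beta\in G_{N_1}$. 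Next I would localize this. The identity displayed in the proof of Proposition~\ref{p:bN1(gm, ab)=B_f(b.gm,a)} holds prime by prime, so for each $q\mid N_1$ the factor $W_q(g(\gm,\beta))$ depends on $\beta\in G_{N_1}$ only through $\beta\bmod q^{v_q(N)-v_q(c)}$, the $q$‑part of $N_1$; and since $G_{N_1}$ surjects onto $\prod_{q\mid N_1}(\Z/q^{v_q(N)-v_q(c)}\Z)^\times$ by the Chinese Remainder Theorem and Dirichlet's theorem, the product identity $\prod_{q\mid N_1}W_q(g(\gm,\beta))=c\,\psi(r_{N_1}(\beta))$ separates variables, yielding for each $q\mid N_1$ a constant $c_q\in\C^\times$ and a Dirichlet character $\psi_q$ modulo $q^{v_q(N)-v_q(c)}$ with $W_q(g(\gm,\beta))=c_q\,\psi_q(\beta)$ for all $\beta\in G_{N_1}$.

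Second — the local heart of the argument — fix $q\mid N_1$, so that $v_q(c)<v_q(N)$, $v_q(d)<v_q(N)$, and, if $q=2$, $v_2(c)\ne v_2(N)-1$. Feeding $\bspm \beta & 0\\ 0&1\espm\gm^{-1}$ into the alternate formula for $W_q(g(\gm,\alpha))$ recorded just above and comparing with $c_q\psi_q(\beta)$, I would read off exactly which triples $(v_q(N),v_q(c),v_q(d))$ are compatible with this $\beta$‑dependence being that of a multiplicative character. The mechanism is that the local new vector of $\pi_q$ (of conductor $q^{v_q(N)}$), translated by a matrix whose lower‑left entry $-c$ has valuation $v_q(c)<v_q(N)$, picks up a Gauss‑sum / additive‑character factor in $\beta$ which is not a multiplicative character unless $v_q(N)$ is small relative to $\max(v_q(c),v_q(d))$; concretely I expect the resulting bound to be $v_q(N)-v_q(2)-\max(v_q(c),v_q(d))\le 0$ when $q\ge 5$, $\le 1$ when $q=3$, and $\le 3$ when $q=2$. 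The exceptional behaviour at $q=2$ and $q=3$ enters for precisely the reason it does in Proposition~\ref{Gm1(M1)} — non‑cyclicity of $(\Z/2^e\Z)^\times$ and the presence of small‑order elements at $3$ — and in practice this step reduces to the same kind of congruence computation in $SL(2,\Z/q^e\Z)$ carried out there.

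Finally, the bookkeeping. Since $\gcd(c,d)=1$, for every prime $q$ at most one of $v_q(c),v_q(d)$ is positive, so $v_q(2cd)=v_q(2)+\max(v_q(c),v_q(d))$ and hence $v_q(N/M)=\max\bigl(0,\ v_q(N)-v_q(2)-\max(v_q(c),v_q(d))\bigr)$. A direct check shows that when $q\nmid N_1$ — i.e.\ when $q$ satisfies one of conditions (3)–(5) of Lemma~\ref{lem:5conditionsForMultiplicativeAtv} (or $q\nmid N$) — one has $v_q(N/M)=0$, while for $q\mid N_1$ the bound of the previous paragraph gives $v_q(N/M)\le 0,1,3$ according as $q\ge 5$, $q=3$, $q=2$. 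Thus $N/M\mid 24$. For the last assertion, note that for odd $q$ one has $q\mid N_1 \iff v_q(c)<v_q(N)$ and $v_q(d)<v_q(N) \iff q\mid N/M$; since $N/M\mid 24$ has no prime factor exceeding $3$, neither does $N_1$. The one genuinely hard step is the local analysis in the third paragraph: producing the alternate formula for $W_q(g(\gm,\beta))$ in a form precise enough to decide exactly when its $\beta$‑dependence is that of a Dirichlet character, and in particular to make the exponents $3$ at $q=2$ and $1$ at $q=3$ sharp; everything else is manipulation of the results of Section~\ref{section: Reformulation of the problem} together with elementary $p$‑adic valuation arithmetic.
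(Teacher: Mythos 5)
Your first paragraph is sound and matches the paper: multiplicativity plus Corollary~\ref{cor: BsubN1 factors through rN1} and Lemma~\ref{l:multve+factors=>DirChar} do yield $B_{f,N_1}(\gm,\alpha\beta)=\psi(r_{N_1}(\beta))B_{f,N_1}(\gm,\alpha)$ for a Dirichlet character $\psi$. But from that point on there is a genuine gap: the entire content of the theorem is concentrated in your ``local heart of the argument,'' and you do not carry it out. You propose to feed $\bspm \beta&0\\0&1\espm\gm^{-1}$ into ``the alternate formula for $W_q(g(\gm,\alpha))$ recorded just above,'' but no such explicit formula exists in the paper (the lemmas preceding the proposition only record that $W_q$ depends on the bottom row of $\gm$ mod $q^{v_q(N)}$), and you explicitly flag the decisive step — showing that the $\beta$-dependence of a ramified new-vector Whittaker value can be that of a Dirichlet character only when $v_q(N)-v_q(2)-\max(v_q(c),v_q(d))\le 0,1,3$ for $q\ge 5$, $q=3$, $q=2$ — as something you ``expect'' and identify as ``the one genuinely hard step.'' A conjecture consistent with the statement being proved is not a proof of it; in particular nothing in your write-up explains where the sharp exponents $3$ and $1$ come from, beyond an analogy with Proposition~\ref{Gm1(M1)}.

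The paper's actual route avoids local Whittaker analysis entirely and is global and group-theoretic, which is why Propositions~\ref{Gm1(M1)} and \ref{Gm0N+dbgmInv=Gm0M} exist. From the character identity one compares contributions at all places to get $B_f(\beta\cdot x_N(\gm),\alpha)=\psi(r_{N_1}(\beta))B_f(\gm,\alpha)$, hence $f|_k\delta_\beta\gm^{-1}=\psi(r_{N_1}(\beta))f$ for suitable $\delta_\beta$. Thus the group $\Gm_f$ of matrices under which $f$ transforms by a scalar contains $\Gm_0(N)$ together with the set \eqref{e:setOfdgmInverse}, and Proposition~\ref{Gm0N+dbgmInv=Gm0M} identifies the group these generate as $\Gm_0(M)$ with $M=\gcd(N,2cd)$. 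The scalar $\xi$ is a character of $\Gm_0(M)$, hence trivial on its commutator subgroup, hence (by Proposition~\ref{Gm1(M1)}) trivial on $\Gm_1(M_1)$; minimality of the level $N$ then forces $N\mid M_1$, and the case analysis defining $M_1$ is exactly what produces the exponents $3$ at $q=2$, $1$ at $q=3$ and $0$ for $q>3$, i.e.\ $N/M\mid 24$. If you want to salvage your local approach you would have to prove your claimed valuation bounds from the theory of conductors of local newforms, which is a substantial piece of work not present in your proposal; as written, the argument establishes only the (already known) reduction to a character identity.
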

\begin{proof}
Define $x_0 \in X_N$ from $\gm$ as in proposition \ref{p:bN1(gm, ab)=B_f(b.gm,a)}.
Then, by proposition  \ref{p:bN1(gm, ab)=B_f(b.gm,a)}, we have 
$$B_{f,N_1}(\gm, \alpha \beta) = B_{f,N_1}( x_0, \alpha \beta) = B_{f,N_1}( \beta\cdot x_0, \alpha)\qquad \forall \alpha \in \Q^\times, \;\beta \in G_{N_1},$$ and by 
lemma \ref{l:multve+factors=>DirChar}
we have 
$$
B_{f,N_1}(\gm, \alpha\beta) = \psi( r_{N_1}( \beta)) B_{f,N_1}( \gm , \alpha)
\qquad \forall \alpha \in \Q^\times, \;\beta \in G_{N_1}.
$$ 
It follows that
$$B_{f,N_1}( \beta \cdot x_0, \alpha) = \psi(r_{N_1}( \beta)) B_{f,N_1}( \gm, \alpha).$$
Comparing contributions from other places, we find that
$$B_f(\beta \cdot x_N(\gamma), \alpha) = \psi( r_{N_1}( \beta)) B_f( \gm, \alpha), 
\qquad \forall \alpha \in \Q^\times, \beta \in G_{N}.
$$
Hence for any $\beta \in G_N, \delta_\beta \in SL(2, \Z)$ with $x_N( \delta_ \beta ) = \beta \cdot x_N( \gm),$ we have
$f|_k \delta_\beta = \psi(r_{N_1}(\beta))
f|_k \gm,$
and so 
$$f|_k \delta_\beta\gm^{-1} = \psi( r_{N_1}(\beta)) f.$$
Now let $\Gm_f \subset SL(2, \Z)$ be the subgroup consisting of all matrices $\gm_1 \in SL(2, \Z)$ such that $f|_k \gm_1 = \xi( \gm_1) f$
for some $\xi(\gm_1) \in \C^\times.$  Clearly, $\Gm_0(N) \subset \Gm_f,$
and so is 
\begin{equation}\label{e:setOfdgmInverse}
\left\{ \left.\bpm a' &b'\\ c'&d' \epm \gm^{-1} \;\right|\; 
 d' \equiv d \pmod{N},\; c' \equiv mc \pmod{N}, \text{ some }m \in (\Z/N\Z)^\times\right\}.\end{equation}
 \begin{proposition}\label{Gm0N+dbgmInv=Gm0M}
 Let $N \ge 1$ be an integer, and fix 
 $\gm = \bspm a&b\\ c&d \espm \in SL(2, \Z).$  Set $M=\gcd(N, 2cd).$  The subgroup 
 of $SL(2, \Z)$ generated by $\Gm_0(N)$ and
 \eqref{e:setOfdgmInverse} is $\Gm_0(M).$
 \end{proposition}
 \begin{proof}
 Let $\Gm$ denote the group in question.  
 Clearly, $\Gm$ contains the principal congruence subgroup $\Gm(N),$ so that it suffices to compute the image of $\Gm$ in 
 $SL(2, \Z/N\Z),$ which we denote $\bar \Gm.$
   Now, 
 $$d' \equiv d \pmod{N},\; c' \equiv mc \pmod{N}
 \implies 
 \bpm a' &b'\\ c'&d' \epm \gm^{-1}
 \equiv\bpm  * & * \\ cd(m-1) & * \epm 
 \pmod{N}.$$
 This proves that $\Gm \subset \Gm_0(M).$  
 It is permissible to vary $\gm$ by an element of $\Gm_0(N),$ and hence to place additional conditions on $a$ and $b.$  Therefore, we may assume that 
 $$
 q\mid \gcd(N,d) \implies a\equiv 0 \pmod{q^{v_q(N)}}, \qquad 
 q\mid N, \, q\nmid d \implies b \equiv 0 \pmod{q^{v_q(N)}}.
 $$
 For $m \in (\Z/N\Z)^\times,$ set 
 $$
 h(m) = \bpm a& b\bar m\\ cm & d \epm 
 \gm^{-1} = \bpm 1- (\bar m -1) bc & ab(\bar m -1) \\ cd( m-1) & 1-bc( m-1)\epm
 \equiv 
 \begin{cases}
 \bpm 1&0\\cd(m-1) & 1 \epm, & q \nmid d,\\ \bpm \bar m&0\\cd(m-1) & m\epm, & q \mid d,
 \end{cases}
 \pmod{q^{v_q(N)}}.
 $$
 (Here $\bar m$ is the inverse of $m$ in $\Z/N\Z.$)
 Clearly, $h(m)\in \bar \Gm,$  for each $m.$
 
 Assume $2 \nmid \gcd(N,d).$  
 We show that $\bar \Gm$ contains the subgroup generated by $\bspm 1&0\\ M & 1\espm.$ Using the isomorphism
 $SL(2, \Z/N\Z) \cong \prod_{q\mid N} SL(2, \Z/q^{v_q(N)}\Z),$ it suffices to prove that for each $q \mid N,$ $\bar \Gm$ contains  
 an element congruent to $\bspm 1&0\\ \epsilon q^{v_q(M)} & 1 \espm \pmod{q^{v_q(N)}},$
 for some $\epsilon \in \Z/N\Z^\times,$ and congruent to the identity modulo $q_1^{v_{q_1}(N)}$ for each $q_1 \mid N, q_1 \ne q.$  If $q \nmid d,$ use $h(m),$
 with
 $$m \equiv -1 \pmod{q^{v_q(N)}}, \qquad
 m \equiv 1 \pmod{q_1^{v_{q_1}(N)}}, \; q_1\mid N, q_1\ne q.$$
 If $q\mid d,$ use $h(m)^2,$ for $m$ subject to the same conditions.
 
 If $2\mid \gcd(N,d),$ the same approach shows that the subgroup of $SL(2, \Z/N\Z)$ generated by $\bspm 1&0\\2M& 1\espm$ is
 contained in $\bar \Gm.$ 
 
  Proposition \ref{Gm0N+dbgmInv=Gm0M}
 is an immediate consequence of lemma 
 \ref{B+M=abcMd} when $2\nmid \gcd(N,d).$ If $2 \mid \gcd(N,d)$ then lemma  \ref{B+M=abcMd} implies that $\Gm_0(2M) \subset \Gm.$  Now, for any even $M,$ 
 and any $\gm_1,\gm_2 \in \Gm_0(M)\smallsetminus \Gm_0(2M),$ the matrix
 $\gm_1^{-1} \gm _2$ is in $\Gm_0(2M).$  
 It follows that 
 $\Gm_0(M)$ is generated by $\Gm_0(2M)$ and any single element of $\Gm_0(M) \smallsetminus  \Gm_0(2M).$ 
 Any matrix in $SL(2, \Z)$ which maps to 
$h(-1)\in SL(2, \Z/N\Z)$ lies in $\Gm_0(M) \smallsetminus  \Gm_0(2M),$ so proposition  \ref{Gm0N+dbgmInv=Gm0M} follows in the case $2\mid \gcd(d,N)$ as well.
  \end{proof}
  It follows directly from proposition \ref{Gm0N+dbgmInv=Gm0M} that the group $\Gm_f$ of all matrices $\gm_1 \in SL(2, \Z)$ such that $f|_k \gm_1 = \xi( \gm_1) f$
for some $\xi(\gm_1) \in \C^\times$
contains $\Gm_0(N).$   
  Further the function $\xi: \Gm_f \to \C$ is
a character, whose restriction to $\Gm_0(N)$ is $\widetilde \chi.$  

Now, $f$ is of level $N.$  It follows that $N$ is the smallest integer satisfying 
\begin{enumerate}
\item $\Gm_0(N) \in \Gm_f,$ and
\item $\xi(\gm_1) = 1, \; \forall \, \gm_1 \in \Gm_1(N).$ 
\end{enumerate} 
It is possible that $M<N,$ but only if the 
restriction of $\xi$ to $\Gm_1(M)$ is nontrivial.  However, if  $M_1$ is given in terms of $M$ as 
in proposition \ref{Gm1(M1)}, then 
 $\Gm_0(M_1) \in \Gm_f,$ and
 $\xi(\gm_1) = 1, \; \forall \, \gm_1 \in \Gm_1(M_1).$ 
Therefore, 
by the definition of level, $M_1 \ge N.$
This completes the proof of theorem 
\ref{t:necCond2-simple}. 
\end{proof}
In fact, we have proved the following sharper result.
\begin{theorem}\label{t:necCond2-simple-sharp}
Keep the notation from the statement of theorem \ref{t:necCond2-simple}.  For each prime $q,$ we have
$$
0 \le v_q\left( \frac NM\right) \le 
\begin{cases}
0,& q>3, \text{ and/or } v_q(M) =0,\\
1, & q=3, v_q(M) >0,\\
2, & q=2, v_q(M) =1,\\
3, & q=2, v_q(M) \ge 2.
\end{cases}
$$
\end{theorem}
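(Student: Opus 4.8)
The plan is to point out that Theorem~\ref{t:necCond2-simple-sharp} contains nothing beyond the proof of Theorem~\ref{t:necCond2-simple}: it is exactly what that argument yields once one keeps track of the situation at each prime $q$ separately, rather than only recording the global consequence ``$N/M\mid 24$''. So the proof I would write is a pointer back to the preceding argument, followed by the prime-by-prime bookkeeping. To set up, I would recall the two facts established there: the group $\Gm_f\subseteq SL(2,\Z)$ of all $\gm_1$ with $f|_k\gm_1=\xi(\gm_1)f$ for some $\xi(\gm_1)\in\C^\times$ contains $\Gm_0(M)$ (Proposition~\ref{Gm0N+dbgmInv=Gm0M}), and $\xi$ is a homomorphism into $\C^\times$ whose restriction to $\Gm_0(N)$ is $\wt\chi$.

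From these I would argue as before. Because $\C^\times$ is abelian, $\ker\xi$ contains the commutator subgroup of $\Gm_0(M)$; because $f$ is of level $N$, $\ker\xi$ contains $\Gm_1(N)$. Hence $\ker\xi$ contains the subgroup $H=\langle\Gm_1(N),\,[\Gm_0(M),\Gm_0(M)]\rangle$, and Proposition~\ref{Gm1(M1)} identifies $H$ with $\Gm_1(\gcd(M_1,N))$, where $M_1$ is the integer attached to $M$ there; since $\gcd(M_1,N)\mid M_1$ this gives $\Gm_1(M_1)\subseteq\ker\xi$. On the other hand $M\mid M_1$, so $\Gm_0(M_1)\subseteq\Gm_0(M)\subseteq\Gm_f$. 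Thus $M_1$ satisfies both conditions --- $\Gm_0(M_1)\subseteq\Gm_f$ and $\xi|_{\Gm_1(M_1)}\equiv 1$ --- that characterize $N$ as the level of $f$.

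The only point where the sharper statement needs a touch more than the published argument, and the step I would treat most carefully, is the final one: instead of merely deducing $M_1\ge N$, I would invoke that the level is determined prime by prime --- that $v_q(N)$ is the smallest exponent of $q$ compatible with the two conditions above, which is transparent from the adelic description $N=\prod_q q^{v_q(N)}$ with $q^{v_q(N)}$ the conductor of $\pi_q$. This yields $v_q(N)\le v_q(M_1)$ for every prime $q$. Subtracting $v_q(M)$ and substituting the four-case formula for $v_q(M_1)$ from Proposition~\ref{Gm1(M1)} gives
$$
v_q(N/M)=v_q(N)-v_q(M)\le v_q(M_1)-v_q(M)=
\begin{cases}
0,& q>3,\ \text{and/or}\ v_q(M) =0,\\
1,& q=3,\ v_q(M) >0,\\
2,& q=2,\ v_q(M) =1,\\
3,& q=2,\ v_q(M) \ge 2,
\end{cases}
$$
which is precisely the asserted upper bound, while the lower bound $0\le v_q(N/M)$ is immediate from $M=\gcd(N,2cd)\mid N$. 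There is no genuinely new computation: Propositions~\ref{Gm1(M1)} and~\ref{Gm0N+dbgmInv=Gm0M} have already carried out all the group theory, and the case split in the theorem is literally the case split in the definition of $M_1$.
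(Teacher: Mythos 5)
Your proposal is correct and follows exactly the route the paper intends: the paper offers no separate proof of Theorem \ref{t:necCond2-simple-sharp} beyond the remark that it is already contained in the proof of Theorem \ref{t:necCond2-simple}, and your writeup is precisely that argument ($\ker\xi\supseteq\langle\Gm_1(N),[\Gm_0(M),\Gm_0(M)]\rangle=\Gm_1(\gcd(M_1,N))$, $\Gm_0(M_1)\subseteq\Gm_f$, hence $N\mid M_1$) with the prime-by-prime bookkeeping made explicit. Your observation that one needs $N\mid M_1$ rather than merely $M_1\ge N$ (justified via the local conductors) is a worthwhile clarification of a point the paper glosses over, but it is the same proof.
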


\section{Main results in general}

We would now like to treat the general case.
That is, instead of assuming the function $B_{f,N_1}(\gm, \cdot)$ is multiplicative, we show that if 
$B_{f}(\gm, \cdot)$ is multiplicative, then $B_{f,N_1}(\gm, \cdot)$ satisfies a ``weak multiplicativity'' property, with the weakness determined by how often $B_f^{N_1}( \gm, \cdot)$ vanishes.  We then deduce analogues of 
theorems \ref{T:necCond(mu=0)-simple} and \ref{t:necCond2-simple} appropriate to this setting.
 
First, we require an analogue of lemma \ref{l:multve+factors=>DirChar}.
\begin{lemma}
Let $N\ge 1$ be an integer, and fix a multiplicatively closed subset 
$S \subset  G_N.$  Let $\bar S$ denote the image of $S$ in $(\Z/N\Z)^\times.$ Take $h: \Q^\times \to \C$ a nonzero function satisfying 
\begin{equation}\label{e:weakMultiplicativityRelativeToASetS}\begin{aligned}
h( \alpha_1 \beta_1) h( \alpha_2\beta_2)& = h( \alpha_1\beta_2)h(\alpha_2\beta_1) \\
&\forall \; \alpha_1, \alpha_2, \beta_1, \beta_2 \in S\text{ s.t. } |\alpha_1|_p\text{ or } |\alpha_2|_p \ne 1 \implies |\beta_1|_p = |\beta_2|_p =1 \qquad \forall \text{ primes }p.\end{aligned}\end{equation}
Assume that $h$ factors through 
$(\Z/N\Z)^\times.$  If $G \subset (\Z/N\Z)^\times$ is a subgroup, satisfying 
\begin{equation}\label{e:PropertyImposedOnASubgroupOfZNZToGetACharacter}
\forall g_1\in G, g_2 \in \bar S, \;\exists \beta_1,\beta_2 \in S, \; \text{ s.t. }   |\beta_1|_p \ne 1 \Rightarrow  |\beta_2|_p = 1, \text{ all primes }p, \text{ and }
r_N(\beta_1) = g_1, \; r_N(\beta_2) = g_2,
\end{equation}
then there is a character $\psi$ of $G$ such that 
$
h( \alpha\beta) = \psi( r_N(\alpha))h(\beta),
$ for all $\alpha \in G_N$ with $r_N(\alpha) \in G,$ and all $\beta \in G_N$ such 
that $r_N(\beta) \in \bar S.$\label{weakMultve+factors=>CharOnCosets}
\end{lemma}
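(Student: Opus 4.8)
The plan is to imitate the proof of Lemma \ref{l:multve+factors=>DirChar}, but to be careful about which products $\alpha\beta$ are actually "allowed" by the weak multiplicativity hypothesis \eqref{e:weakMultiplicativityRelativeToASetS}, which only applies to quadruples drawn from $S$ rather than from all of $G_N$. So the first step is to fix a reference point: since $h$ is nonzero and factors through $(\Z/N\Z)^\times$, pick $\alpha_0 \in S$ with $h(\alpha_0)\ne 0$ (such an $\alpha_0$ can be taken in $S$ rather than merely in $G_N$ by enlarging $\alpha_0$ by a suitable prime not dividing $N$, once one knows $h$ does not vanish on the relevant coset; I will need to check that nonvanishing survives such an adjustment, which it does because $h$ factors through $(\Z/N\Z)^\times$ and $r_N(\alpha_0 p)=r_N(\alpha_0)r_N(p)$ ranges over the full coset $r_N(\alpha_0)\bar S$). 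Then for $g\in G$, I would try to define $\psi(g)$ by choosing $\beta_1\in S$ with $r_N(\beta_1)=g$ and setting $\psi(g) = h(\beta_1\alpha_0)/h(\alpha_0)$, after first verifying via \eqref{e:PropertyImposedOnASubgroupOfZNZToGetACharacter} that such $\beta_1$ exists and is coprime to $\alpha_0$ in the $p$-adic sense.

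Next I would check well-definedness: if $r_N(\beta_1)=r_N(\beta_1')=g$, then $h(\beta_1\alpha_0)=h(\beta_1'\alpha_0)$ because $h$ factors through $(\Z/N\Z)^\times$ and $r_N(\beta_1\alpha_0)=r_N(\beta_1'\alpha_0)$. This is the payoff of the "factors through $(\Z/N\Z)^\times$" hypothesis — it lets me move freely between representatives even though the multiplicativity relation is only available on $S$. Then, to prove the homomorphism property $\psi(g_1g_2)=\psi(g_1)\psi(g_2)$ for $g_1,g_2\in G$ (and more generally the stated identity $h(\alpha\beta)=\psi(r_N(\alpha))h(\beta)$), I would apply \eqref{e:weakMultiplicativityRelativeToASetS} to a quadruple $\alpha_1=\beta_1, \alpha_2=\alpha_0, \beta_1=\beta_2', \beta_2=1$ (relabeling appropriately), exactly as in Lemma \ref{l:multve+factors=>DirChar}, where the coprimality hypothesis needed to invoke \eqref{e:weakMultiplicativityRelativeToASetS} is precisely what condition \eqref{e:PropertyImposedOnASubgroupOfZNZToGetACharacter} guarantees: given $g_1\in G$ and $g_2\in\bar S$, it produces $\beta_1,\beta_2\in S$ with disjoint support and the prescribed images under $r_N$, and these are the elements I feed into the relation. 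Multiplying the relation $h(\beta_1\alpha_0)h(\alpha_0)=h(\beta_1\cdot 1)\,h(\alpha_0\alpha_0)$ — adjusted so the supports are disjoint — and dividing by $h(\alpha_0)^2$ recovers $\psi(g_1g_2)=\psi(g_1)\psi(g_2)$, and a similar computation with $\beta$ in place of one copy of $\alpha_0$ gives the coset identity.

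The main obstacle I anticipate is bookkeeping around supports: condition \eqref{e:PropertyImposedOnASubgroupOfZNZToGetACharacter} only promises representatives $\beta_1,\beta_2$ whose supports are disjoint from each other, not from $\alpha_0$ or from $\beta$, so before applying \eqref{e:weakMultiplicativityRelativeToASetS} I may need to replace $\alpha_0$ or a given representative by another element of the same $r_N$-class supported away from the finitely many primes in play — again legitimate because $h$ factors through $(\Z/N\Z)^\times$ and $S$ is multiplicatively closed, so I can multiply by high powers of auxiliary primes $p\nmid N$ with $r_N(p)=1$ without changing any $h$-value or leaving $S$. Once that reduction is in hand, the computation is the same three-line manipulation as in Lemma \ref{l:multve+factors=>DirChar}. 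Finally I would note $\psi$ takes values in $\C^\times$ (since each $h(\beta_1\alpha_0)\ne 0$, as $r_N(\beta_1\alpha_0)$ lies in the coset where $h\ne 0$) and that $G$ is finite, so $\psi$ is automatically a character of $G$, completing the proof.
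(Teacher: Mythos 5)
Your proposal follows essentially the same route as the paper, whose proof simply defers to the argument of Lemma \ref{l:multve+factors=>DirChar} and notes that the extension is clear from \eqref{e:PropertyImposedOnASubgroupOfZNZToGetACharacter} and \eqref{e:weakMultiplicativityRelativeToASetS}; your version carries out that same construction of $\psi$ with the support bookkeeping made more explicit. One small caution: your fallback device of multiplying a representative by powers of an auxiliary prime $p$ with $r_N(p)=1$ does not obviously keep it inside $S$ (which is only assumed multiplicatively closed), so the coprime representatives should be drawn directly from \eqref{e:PropertyImposedOnASubgroupOfZNZToGetACharacter}, as you do in your main line of argument.
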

\begin{proof}
The same arguments used to prove lemma \ref{l:multve+factors=>DirChar} show that
$h(\alpha)$ is a constant multiple of $\psi(r_N(\alpha))$ for some character $\psi$ of $G$ and all $\alpha$ with $r_N(\alpha) \in G.$  The extension is clear from \eqref{e:PropertyImposedOnASubgroupOfZNZToGetACharacter} and the weak multiplicativity property \eqref{e:weakMultiplicativityRelativeToASetS}.
\end{proof}
We would like to apply lemma 
\ref{weakMultve+factors=>CharOnCosets}
to the analysis of the function 
$B_{f,N_1}$ introduced in definition
\ref{d:B_N_1}.  It follows from Corollary \ref{c:multveCond} that $B_{f,N_1}$ satisfies \eqref{e:weakMultiplicativityRelativeToASetS}
relative to the set $S$ of all $\alpha \in \Q^\times$
such that $
 B_f^{N_1}(\gm, n)\ne 0.$
 What is not clear, however, is that the image 
 $\bar S$ of this set in $(\Z/N_1\Z)^\times$ will contain any subgroups at all.
 (For example, $\bar S$ might not contain $1.$)
 We address this point in the next section.  For now, we 
 assume that some group $G$ as in lemma 
\ref{weakMultve+factors=>CharOnCosets}
exists, and deduce analogues of
theorems \ref{T:necCond(mu=0)-simple} and 
\ref{t:necCond2-simple}.
\begin{corollary}
Let $N\ge 1$ be an integer and let
$f$ be a Maass form of level $N$ 
such that $W_f(g) = \prod_v W_v(g_v)$ for all 
 $g = \{ g_v\} \in GL(2, \A).$ 
 Fix $\gm = \bspm a&b\\ c& d \espm \in SL(2, \Z)$ and define $N_1$ and $B_{f,N_1}$ 
 as in \ref{d:B_N_1}.  Also, define 
\begin{equation}\label{e:B_f^N,B_N^N1}
B_f^{N}(\gm , \alpha) 
:=W_\infty\left( \bspm \text{\rm sign}(\alpha)&\\&1 \espm \right)
 \prod_{p\nmid N}W_p\left( \bspm \alpha&\\&1 \espm \gm^{-1}\right), 
 \qquad
 B_N^{N_1}(\gm , \alpha) 
:=
 \prod_{q\mid N, \, q\nmid N_1}W_q\left( \bspm \alpha&\\&1 \espm \gm^{-1} \right).
\end{equation}
Recall that 
for $q\mid N, \, q\nmid N_1,$ 
the function $W_q\left( \bspm \alpha&\\&1 \espm \gm^{-1} \right)$ depends 
only on $v_q(\alpha).$
Fix $\alpha_0$ in the subgroup of 
$\Q^\times$ generated by the primes dividing $N,$ and such that 
$W_q\left( \bspm q^{v_q(\alpha_0)}&\\&1 \espm \gm^{-1} \right)\ne 0,$
for each $q\mid N, \, q\nmid N_1.$   

Recall further that $B_f^N( \gm , \alpha)=0$ 
whenever $\alpha \notin \Z.$  
Let $S= \{ n \in \Z\mid B_f^N(\gm, n) \ne 0\}.$
Consider 
the function $B_{f,N_1}(\gm, \alpha_0 \cdot n),\; n \in \Z\cap G_N.$  If $G \subset (\Z/N\Z)^\times$ is a subgroup 
satisfying \eqref{e:PropertyImposedOnASubgroupOfZNZToGetACharacter}, 
then there is a character $\psi$ such that 
$$B_{f,N_1}(\gm, \alpha_0 \cdot n)
= B_{f,N_1}( n\cdot x_{N_1}(\gm), \alpha_0)
= \psi( r_N(n)) B_{f,N_1}( \gm, \alpha),$$
and 
\begin{equation}
\label{e:B_f(n.gm,alpha0)=psi(n)B_f(gm,alpha_0)}
B_f(\gm, n\cdot \alpha_0) =
\psi(r_N(n)) B_f(\gm, \alpha_0)= B_f(n\cdot x_N(\gm), \alpha_0),\end{equation}
for all $n \in S, r_N(n) \in G.$ 
\label{c:B_f(gm, a.n)=B_f(n.gm,a)=psi(n)B_f(gm,a)}
\end{corollary}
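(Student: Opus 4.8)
The plan is to essentially transcribe the argument of theorems \ref{T:necCond(mu=0)-simple} and \ref{t:necCond2-simple} into the ``weak'' setting, replacing the input ``$B_{f,N_1}(\gm,\cdot)$ is multiplicative and factors through $r_{N_1}$'' with ``$B_{f,N_1}(\gm,\cdot)$ satisfies \eqref{e:weakMultiplicativityRelativeToASetS} relative to $S$ and factors through $r_{N_1}$,'' and using lemma \ref{weakMultve+factors=>CharOnCosets} in place of lemma \ref{l:multve+factors=>DirChar}. First I would observe that, by corollary \ref{c:multveCond} applied with $h_1 = B_f^{N_1}(\gm,\cdot)$ (which is multiplicative, being a product of the local factors covered by lemma \ref{lem:5conditionsForMultiplicativeAtv}, i.e.\ the factor at $\infty$, the factors at $p\nmid N$, and the factors at $q\mid N$, $q\nmid N_1$) and $h_2 = B_{f,N_1}(\gm,\cdot)$, the hypothesis that $B_f(\gm,\cdot) = h_1 h_2$ is multiplicative forces $B_{f,N_1}(\gm,\cdot)$ to satisfy the quadratic relation \eqref{e:weakMultiplicativityRelativeToASetS} for all quadruples $\alpha_1,\alpha_2,\beta_1,\beta_2$ with the coprimality condition \emph{and} $h_1(\alpha_1\beta_1)h_1(\alpha_2\beta_2)\ne 0$. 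Unwinding the definition of $h_1$, that last nonvanishing condition is exactly the statement that $\alpha_1\beta_1$ and $\alpha_2\beta_2$ lie in $S$ (after absorbing $\alpha_0$); hence $B_{f,N_1}(\gm,\alpha_0\cdot(-))$, as a function on $\Z\cap G_N$, satisfies \eqref{e:weakMultiplicativityRelativeToASetS} relative to $S$.

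Next I would invoke corollary \ref{cor: BsubN1 factors through rN1} to see that $\beta \mapsto B_{f,N_1}(\gm,\alpha_0\beta)$ factors through $r_{N_1}$, and hence (composing with $\Z/N\Z^\times \to \Z/N_1\Z^\times$) through $r_N$. Now the two hypotheses of lemma \ref{weakMultve+factors=>CharOnCosets} are in place — $h := B_{f,N_1}(\gm,\alpha_0\,\cdot)$ is nonzero, satisfies \eqref{e:weakMultiplicativityRelativeToASetS} relative to $S$, factors through $(\Z/N\Z)^\times$, and $G$ is assumed to satisfy \eqref{e:PropertyImposedOnASubgroupOfZNZToGetACharacter} — so the lemma produces a character $\psi$ of $G$ with $h(\alpha\beta) = \psi(r_N(\alpha))h(\beta)$ whenever $r_N(\alpha)\in G$ and $r_N(\beta)\in\bar S$. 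Taking $\beta$ to be an element with $r_N(\beta) = r_N(n_0)$ for some fixed $n_0\in S$ and rearranging gives the first displayed equality $B_{f,N_1}(\gm,\alpha_0\cdot n) = \psi(r_N(n))B_{f,N_1}(\gm,\alpha)$ for $n\in S$ with $r_N(n)\in G$; the middle equality $B_{f,N_1}(\gm,\alpha_0\cdot n) = B_{f,N_1}(n\cdot x_{N_1}(\gm),\alpha_0)$ is just proposition \ref{p:bN1(gm, ab)=B_f(b.gm,a)} again.

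To get \eqref{e:B_f(n.gm,alpha0)=psi(n)B_f(gm,alpha0)} I would multiply the $B_{f,N_1}$ identity by the remaining local factors. For $n\in S$, $B_f^N(\gm,n)\ne 0$ by definition of $S$, and since $n\in G_N$ the action $n\cdot(-)$ changes each local Whittaker factor at $p\nmid N$ and at $q\mid N,\,q\nmid N_1$ only through $v_p$ or $v_q$ — but $n$ is a unit away from $N_1$ so those factors are unchanged; meanwhile at $q\mid N,\,q\nmid N_1$ the factor $W_q$ depends only on $v_q(\alpha)$, which again is unaffected. Combining, $B_f(\gm, n\cdot\alpha_0) = B_{f,N_1}(\gm,\alpha_0\cdot n)\cdot B_f^{N_1}(\gm, n\cdot\alpha_0) = \psi(r_N(n))\,B_{f,N_1}(\gm,\alpha_0)\cdot B_f^{N_1}(\gm,\alpha_0) = \psi(r_N(n))B_f(\gm,\alpha_0)$, and the rightmost expression $B_f(n\cdot x_N(\gm),\alpha_0)$ is proposition \ref{p:bN1(gm, ab)=B_f(b.gm,a)} together with the corresponding identity at the places outside $N_1$.

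The main obstacle is bookkeeping rather than ideas: one must be careful that the set $S$ appearing in corollary \ref{c:multveCond} (defined via nonvanishing of $h_1 = B_f^{N_1}$) matches the set $S = \{n\in\Z\mid B_f^N(\gm,n)\ne 0\}$ in the statement — these differ by the finitely many places $q\mid N,\,q\nmid N_1$, where the extra factor $B_N^{N_1}(\gm,\cdot)$ depends only on $v_q$ and is pinned down by the choice of $\alpha_0$ with $W_q(\cdots)\ne 0$ — and that the passage from a condition ``$\gcd(\alpha_1,\beta_j)=1$ etc.'' among \emph{rational numbers} to a condition among the \emph{integers} $\alpha_0\cdot n$ is compatible with the hypotheses of lemma \ref{weakMultve+factors=>CharOnCosets}. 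I expect this to go through with only the sort of routine verification already carried out in the proofs of theorems \ref{T:necCond(mu=0)-simple} and \ref{t:necCond2-simple}.
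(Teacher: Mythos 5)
Your proposal is correct and follows essentially the same route as the paper: the middle equality via Proposition \ref{p:bN1(gm, ab)=B_f(b.gm,a)}, the character $\psi$ via Corollary \ref{c:multveCond} plus Corollary \ref{cor: BsubN1 factors through rN1} feeding into Lemma \ref{weakMultve+factors=>CharOnCosets}, and the final identity \eqref{e:B_f(n.gm,alpha0)=psi(n)B_f(gm,alpha_0)} by comparing the local factors away from $N_1$. The bookkeeping points you flag (the discrepancy between nonvanishing of $B_f^{N_1}$ and of $B_f^N$, resolved by the choice of $\alpha_0$) are exactly the details the paper handles in the discussion preceding the corollary, so there is no gap.
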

\begin{proof}
The fact that $B_{f,N_1}(\gm, \alpha_0 \cdot n)
= B_{f,N_1}( n\cdot x_{N_1}(\gm), \alpha_0)$
was given in proposition \ref{p:bN1(gm, ab)=B_f(b.gm,a)}.  Equality with $\psi( r_N(n)) B_{f,N_1}( \gm, \alpha)$ follows from 
 lemma \ref{weakMultve+factors=>CharOnCosets}.
 The second identity follows from comparing the contributions at the other primes.
\end{proof}

\begin{theorem}\label{T:mu=0-general}
Let $N\ge 1$ be an integer and let
$f$ be a Maass form of level $N$ 
such that $W_f(g) = \prod_v W_v(g_v)$ for all 
 $g = \{ g_v\} \in GL(2, \A).$ 
 Fix $\gm = \bspm a&b\\ c& d \espm \in SL(2, \Z).$
  Let $S= \{ n \in \Z\mid B_f^N(\gm, n) \ne 0\}.$
If there is a group $G$ satisfying condition \eqref{e:PropertyImposedOnASubgroupOfZNZToGetACharacter} relative to $S,$ such that the orbit of $x_N(\gm)$ under the action of $G$ has more than one element then the cusp parameter of $\fa = \gm \cdot \infty$ is trivial.
\end{theorem}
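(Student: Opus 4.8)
The plan is to argue by contraposition: assume $\mu_{\fa,\chi}\neq 0$, and deduce that the $G$-orbit of $x_N(\gm)$ is a single point. Write $\mu_{\fa,\chi}=r/s$ in lowest terms, so $s>1$, and recall from the proof of Theorem \ref{T:necCond(mu=0)-simple} the three facts: $m_\fa=\prod_{q\mid N}q^{\max(0,\,v_q(N)-2v_q(c))}$; $s$ divides $N_3:=\prod_{q\mid N}q^{\min(v_q(c),\,v_q(N)-v_q(c))}$, so every prime divisor of $s$ divides $N$; and $B_f(\gm,\cdot)$ is supported on the single coset $\tfrac1{m_\fa}(\Z+\mu_{\fa,\chi})$ of $\tfrac1{m_\fa}\Z$ in $\Q$.

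Fix $g\in G$. Since $B_f^N(\gm,1)\neq 0$ we have $1\in\bar S$, so condition \eqref{e:PropertyImposedOnASubgroupOfZNZToGetACharacter} (with $g_1=g$, $g_2=1$) produces a positive integer $n\in S$, coprime to $N$, with $r_N(n)=g$. Let $\alpha_0$ be the $N$-part of some $\alpha\in\Q^\times$ with $B_f(\gm,\alpha)\neq 0$; then $\alpha_0$ is an admissible base point for Corollary \ref{c:B_f(gm, a.n)=B_f(n.gm,a)=psi(n)B_f(gm,a)}, and $B_f(\gm,\alpha_0)\neq 0$ because the factor $B_f^N$ contributes only the nonzero normalizing archimedean value at the positive pure-$N$-part argument $\alpha_0$, while the remaining factors contribute the nonzero $N$-part value of $B_f(\gm,\cdot)$ at $\alpha$. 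By Corollary \ref{c:B_f(gm, a.n)=B_f(n.gm,a)=psi(n)B_f(gm,a)}, $B_f(n\cdot x_N(\gm),\alpha_0)=\psi(r_N(n))\,B_f(\gm,\alpha_0)\neq 0$. Put $\fa_n=(n\cdot x_N(\gm))\cdot\infty$. Since $r_N(n)$ is a unit mod $N$, $v_q$ of the first cusp-coordinate is unchanged at every $q\mid N$, so $m_{\fa_n}=m_\fa$ and $B_f(n\cdot x_N(\gm),\cdot)$ is supported on a coset of $\tfrac1{m_\fa}\Z$; as this coset contains $\alpha_0\in\operatorname{supp}B_f(\gm,\cdot)$, and two cosets of $\tfrac1{m_\fa}\Z$ that meet coincide, $\mu_{\fa_n,\chi}=\mu_{\fa,\chi}$. (Equivalently: writing $\alpha_0=r''/(sm_\fa)$ with $r''$ a pure-$N$-part integer $\equiv r\pmod s$, the condition $n\alpha_0\in\operatorname{supp}B_f(\gm,\cdot)$ gives $nr''\equiv r\pmod s$, hence $n\equiv 1\pmod s$, hence $g\equiv 1\pmod s$.) Since $g$ was arbitrary, this holds for all $g\in G$.

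The remaining step — and the one I expect to be the main obstacle — is to upgrade ``$\mu_{\fa_n,\chi}=\mu_{\fa,\chi}\neq 0$ for every $g\in G$'' to ``$G$ fixes $x_N(\gm)$'', i.e.\ $G\subseteq\{\,g:\ g\equiv 1\pmod{\prod_q q^{\max(0,\,v_q(N)-v_q(c))}}\,\}$, which is exactly the assertion that the orbit is a point. For this one expresses $\mu_{\fa',\chi}$ as a function of the cusp datum $(c',d')$ via $e^{2\pi i\mu_{\fa',\chi}}=\wt\chi\!\left(\gm'\bspm 1&m_{\fa'}\\&1\espm\gm'^{-1}\right)=\wt\chi\!\bspm 1-a'c'm_{\fa'}&*\\ *&1+a'c'm_{\fa'}\espm$ and $a'd'-b'c'=1$, noting that $a'c'm_{\fa'}\equiv (d')^{-1}c'm_{\fa'}$ modulo $N$ and that the only nontrivial contributions come from primes $q$ with $0<v_q(c')<v_q(N)$, where replacing $(c,d)$ by $(r_N(n)^{-1}c,d)$ rescales the relevant unit by $r_N(n)^{-1}$. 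One must then carry out, in the spirit of the calculations in Proposition \ref{Gm1(M1)} and in the proof of Theorem \ref{t:necCond2-simple}, a prime-by-prime check that the nonvanishing of $\mu_{\fa,\chi}$ — equivalently, the nontriviality of $\wt\chi$ on the relevant subgroups $1+q^{v_q(c\,m_\fa)}(\Z/q^{v_q(N)}\Z)$ — together with the constraint $g\equiv 1\pmod s$ and the divisibility $s\mid N_3$, forces $r_N(n)\equiv 1$ to the needed power of $q$ at every prime, so that no prime-component of $c$ can be moved by $G$. This bookkeeping, rather than any new idea, is the substantive part of the proof.
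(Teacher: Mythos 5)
Your first step --- showing that if $\mu_{\fa,\chi}=r/s\neq 0$ then every $g\in G$ reduces to $1$ in $(\Z/s\Z)^\times$ --- is essentially the paper's argument: the paper's proof is literally ``same as Theorem \ref{T:necCond(mu=0)-simple},'' i.e.\ the support-of-the-Fourier-expansion argument combined with the weak multiplicativity supplied by Corollary \ref{c:B_f(gm, a.n)=B_f(n.gm,a)=psi(n)B_f(gm,a)}. (One quibble there: your claim that $B_f(\gm,\alpha_0)\neq 0$ for $\alpha_0$ the $N$-part of a support point is not immediate, since the $N_1$-part of $B_f(\gm,n\alpha_0)$ equals $B_{f,N_1}(n\cdot x_0,\alpha_0)$ rather than $B_{f,N_1}(\gm,\alpha_0)$; it is cleaner to compare $n\alpha_0$ and $n'\alpha_0$ for two elements $n,n'$ of $S$ with prescribed, coprime reductions, exactly as condition \eqref{e:PropertyImposedOnASubgroupOfZNZToGetACharacter} permits. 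This is repairable.)

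The real problem is the step you defer as ``bookkeeping'': it is a genuine gap, and I do not believe the local analysis of $\wt\chi$ you sketch can close it. What part one yields is a congruence modulo $s$, where $s$ divides $N_3=\prod_q q^{\min(v_q(c),\,v_q(N)-v_q(c))}$ (indeed $s$ is only the \emph{order} of the root of unity $\chi(1+acm_\fa)$, so it can be even smaller); what you need is the congruence modulo $\prod_q q^{v_q(N)-v_q(c)}$, which is what it means for $G$ to fix $x_N(\gm)$. These moduli genuinely differ whenever $2v_q(c)<v_q(N)$ for some $q\mid\gcd(N,c)$. For instance, with $N=q^3$ and $v_q(c)=1$ one gets $s\mid q$, while the orbit of $x_N(\gm)$ is a point only if $G$ is trivial modulo $q^2$; so the subgroup $G=1+q\Z/q^3\Z$ satisfies everything part one delivers and still moves $x_N(\gm)$. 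Your proposed fix only re-examines $\wt\chi$ on the subgroups $1+q^{v_q(cm_\fa)}\Z$, which is exactly the information already encoded in $s$, so it cannot produce the stronger congruence. Ruling out such configurations appears to require the transformation-law and level/commutator machinery of Proposition \ref{Gm1(M1)} and Theorem \ref{Main theorem--precise version at theend} (i.e.\ showing $\xi$ would be forced to be trivial on $\Gm_1(M_1)$ for an $M_1$ with $N\nmid M_1$), not a refinement of the support argument. Since you identify this as the substantive part of the proof and do not carry it out, the proposal does not constitute a proof of the theorem.
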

\begin{proof}
Same as theorem \ref{T:necCond(mu=0)-simple}.
\end{proof}

\begin{theorem}\label{Main theorem--precise version at theend}
Let $N$ be an integer,
and let $f$ be a Maass form of level 
$N$ such that 
$W_f(g) = \prod_v W_v(g_v)$ for all 
 $g = \{ g_v\} \in GL(2, \A).$
 Fix $\gm = \bspm a&b\\c&d \espm \in SL(2, \Z).$  Define $B_f^N( \gm, \alpha), \, \alpha \in \Q^\times$ as in \eqref{e:B_f^N,B_N^N1}.
 Let $S= \{ n \in \Z\mid B_f^N(\gm, n) \ne 0\},$
and let $G \subset (\Z/N\Z)^\times$ be a subgroup which satisfies \eqref{e:PropertyImposedOnASubgroupOfZNZToGetACharacter} relative to $S.$  
Let $M$ be the greatest divisor of $N$ which also divides $(m-1)cd$ for every $m \in G.$  Define $M_1$ from $M$ as in 
proposition \ref{Gm1(M1)}.  
Suppose that the Fourier coefficients $B_f(\gm, \alpha), \; \alpha \in \Q^\times$
are multiplicative.  Then $M_1$ is a multiple of $N.$ \end{theorem}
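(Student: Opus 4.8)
The strategy is to run the proof of Theorem~\ref{t:necCond2-simple} almost verbatim, replacing the two ingredients that are no longer available. The full multiplicativity of $B_{f,N_1}(\gm,\cdot),$ which we no longer assume, is replaced by the weak multiplicativity relative to $S$ contained in Corollary~\ref{c:B_f(gm, a.n)=B_f(n.gm,a)=psi(n)B_f(gm,a)}; and Proposition~\ref{Gm0N+dbgmInv=Gm0M}, in which the scalars run over all of $(\Z/N\Z)^\times,$ is replaced by the analogous statement with the scalars confined to $G.$ Concretely, since $B_f(\gm,\cdot)$ is multiplicative, Lemma~\ref{l:bN1timesMultve} and Corollary~\ref{c:multveCond} show that $B_{f,N_1}(\gm,\cdot)$ satisfies \eqref{e:weakMultiplicativityRelativeToASetS} relative to $S$; together with Corollary~\ref{cor: BsubN1 factors through rN1} and Lemma~\ref{weakMultve+factors=>CharOnCosets} (which applies since $G$ satisfies \eqref{e:PropertyImposedOnASubgroupOfZNZToGetACharacter} relative to $S$), and with Corollary~\ref{c:B_f(gm, a.n)=B_f(n.gm,a)=psi(n)B_f(gm,a)}, this produces a character $\psi$ of $G$ with $B_f(\gm,\,n\cdot\alpha_0)=\psi(r_N(n))\,B_f(\gm,\alpha_0)=B_f(n\cdot x_N(\gm),\alpha_0)$ for $n\in S$ with $r_N(n)\in G,$ and $\alpha_0$ as there. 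Using \eqref{e:PropertyImposedOnASubgroupOfZNZToGetACharacter} to write each $m\in G$ as $r_N(n)$ with $n\in S$ gives $B_f(m\cdot x_N(\gm),\alpha_0)=\psi(m)\,B_f(\gm,\alpha_0)$ for all $m\in G$ and all admissible $\alpha_0.$

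Next I would upgrade this, for fixed $m\in G,$ to the statement that $f\big|_k\delta_m$ is a scalar multiple of $f\big|_k\gm$ for every $\delta_m\in SL(2,\Z)$ with $x_N(\delta_m)=m\cdot x_N(\gm).$ In the factorization \eqref{e:baalpha=prod}, the archimedean factor and the factors at primes $p\nmid N$ do not involve $\gm,$ while a factor at a prime $q\mid N$ depends on $\alpha$ only through its $q$-part; as $\delta_m$ and $\gm$ have the same reduction of $d$ mod $N$ and the same $q$-adic valuation of $c$ for every $q\mid N,$ the ratio $B_f(\delta_m,\alpha)/B_f(\gm,\alpha)$ depends only on the ``$N$-part'' of $\alpha$ wherever $B_f(\gm,\alpha)\neq0,$ and off that set $B_f(\delta_m,\alpha)=0$ as well. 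Since the admissible $\alpha_0$'s exhaust the $N$-parts at which $B_f(\gm,\cdot)$ is nonzero, the identity at the $\alpha_0$'s propagates to all of $\Q^\times,$ giving $B_f(\delta_m,\cdot)=\psi(m)\,B_f(\gm,\cdot).$ A Maass form being determined by its Fourier expansion at $\infty,$ we get $f\big|_k\delta_m=\psi(m)\,f\big|_k\gm,$ so $\delta_m\gm^{-1}$ lies in the group $\Gm_f$ of all $\gm_1\in SL(2,\Z)$ with $f\big|_k\gm_1=\xi(\gm_1)f.$

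Thus $\Gm_f$ contains $\Gm_0(N)$ and all the matrices $\delta_m\gm^{-1},\ m\in G.$ Repeating the proof of Proposition~\ref{Gm0N+dbgmInv=Gm0M} with $G$ in place of $(\Z/N\Z)^\times$ — reduce to a single prime via the product decomposition $SL(2,\Z/N\Z)\cong\prod_{q\mid N}SL(2,\Z/q^{v_q(N)}\Z),$ observe that $\delta_m\gm^{-1}$ has lower-left entry $\equiv cd(m^{-1}-1)\pmod N,$ and apply Lemma~\ref{B+M=abcMd}, treating $q=2$ (and the case $2\mid\gcd(N,d)$) by hand — shows that these generate $\Gm_0(M),$ with $M$ the greatest divisor of $N$ dividing $(m-1)cd$ for all $m\in G.$ Finally, $\xi$ is a character of $\Gm_f$ restricting to $\widetilde\chi$ on $\Gm_0(N),$ hence trivial on $\Gm_1(N),$ and trivial on the commutator subgroup of $\Gm_0(M)$ since its image is abelian; so by Proposition~\ref{Gm1(M1)} it is trivial on $\Gm_1(\gcd(M_1,N)),$ while $\Gm_0(\gcd(M_1,N))\subseteq\Gm_0(M)\subseteq\Gm_f.$ As $N$ is the least modulus for which both of these hold (this being what it means for $f$ to have level $N$), we conclude $\gcd(M_1,N)=N,$ i.e. $N\mid M_1.$

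The step I expect to be the main obstacle is the $G$-version of Proposition~\ref{Gm0N+dbgmInv=Gm0M}: one must verify that matrices whose prescribed lower-left entries are the residues $cd(m^{-1}-1),\ m\in G,$ together with the upper-triangular subgroup, generate the entire image of $\Gm_0(M),$ and — just as in the original proof — this needs control of the precise power of $2$ occurring among $\{m-1:m\in G\}$ and a separate treatment of $2\mid\gcd(N,d).$ The propagation argument of the second paragraph is a secondary, essentially bookkeeping, matter, the only delicate point being the coincidence of the zero loci of $B_f(\gm,\cdot)$ and $B_f(\delta_m,\cdot)$ at the primes dividing $N_1.$
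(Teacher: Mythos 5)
Your proposal is correct and follows essentially the same route as the paper: the paper's own proof is precisely ``run Theorem~\ref{t:necCond2-simple} again,'' replacing full multiplicativity by the weak version of Corollary~\ref{c:B_f(gm, a.n)=B_f(n.gm,a)=psi(n)B_f(gm,a)} and Proposition~\ref{Gm0N+dbgmInv=Gm0M} by its $G$-restricted analogue (reducing, via Lemma~\ref{B+M=abcMd}, to producing a single element $\bspm 1&0\\rM&1\espm$ with $\gcd(r,N)=1$), then invoking Proposition~\ref{Gm1(M1)} and the minimality of the level. The two points you flag as delicate (the $G$-version of the generation statement and the coincidence of zero loci needed to propagate the identity from the $\alpha_0$'s to all of $\Q^\times$) are exactly the points the paper treats tersely, and your handling of them is sound; the only immaterial slip is writing $cd(m^{-1}-1)$ where the computation gives $cd(m-1)$, which changes nothing since $G$ is a group.
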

\begin{proof}
The proof is the same as that of 
theorem \ref{t:necCond2-simple}. 
We let $\Gm_f = \{ \gm_1 \in SL(2, \Z)\mid f|_k\gm_1 = \xi(\gm_1)\cdot f\}$
and observe that $\Gm_f$ contains the subgroup 
 $\Gm \subset SL(2, \Z)$ generated by $\Gm_0(N)$ and 
$$
\left\{ \left.
\bpm a'&b'\\c' & d' \epm\gm^{-1} \in SL(2, \Z)\;\,
\right| \;\,
d' \equiv d \pmod{N}, \;
c' \equiv m c \pmod{N}, \text{ for some } m \in G
\right\}.
$$
It is easily checked as before that $\Gm$ is contained in $\Gm_0(M)$ for $M$ as above.  To check the $\Gm \supset \Gm_0(M)$ it suffices to check that 
$\bspm 1& 0 \\ r M & 1 \espm \in \Gm$ for some $r \in \Z$ with $\gcd(r, N) =1.$  

As before we may assume that 
$$
q \mid \gcd(N,d) \implies q^{v_q(N)} \mid a,
\qquad q \mid N, \, q\nmid d \implies q^{v_q(N)} \mid b.
$$
Then for each $m \in (\Z/N\Z)^\times$ there exists $\gm_m \in \Gm$ with 
$$
\gm_m
 \equiv 
 \begin{cases}
 \bpm 1&0\\cd(m-1) & 1 \epm, & q \nmid d,\\ \bpm \bar m&0\\cd(m-1) & m\epm, & q \nmid d,
 \end{cases}
 \pmod{q^{v_q(N)}}.
$$
The ``diagonal part'' is equivalent to an element of $\Gm_0(N), \pmod{N},$ 
and so there is an element of $\Gm$ 
which is equivalent to 
$$
\begin{cases}
 \bpm 1&0\\cd(m-1) & 1 \epm, & q \nmid d,\\ \bpm 1&0\\mcd(m-1) & 1\epm, & q \nmid d,
 \end{cases}
 \pmod{q^{v_q(N)}}.
$$ 
The existence of $\bspm 1& 0 \\ r M & 1 \espm \in \Gm$  with $\gcd(r, N) =1$
follows.    
As before, it follows as before that 
$\xi\big|_{\Gm_1(M_1)}$ is trivial, 
which is a contradiction unless $N\mid M_1,$
because $N$ is the level of $f.$
\end{proof}

\section{A lower bound for the size of the group $G$ occuring in theorem \ref{Main theorem--precise version at theend}}

Theorem \ref{T:mu=0-general}
begs the question of whether any group $G\subset (\Z/N\Z)^\times$ 
satisfying \eqref{e:PropertyImposedOnASubgroupOfZNZToGetACharacter}
will exist.  
The answer is yes.  In fact the subgroup 
of squares will satisfy a condition somewhat 
stronger than \eqref{e:PropertyImposedOnASubgroupOfZNZToGetACharacter}.

\begin{lemma}
Let $N\ge 1$ be an integer and let 
 $f$ be a Maass form of level $N$
 which satisfies $W_f(g) = \prod_v W_v(g_v)$ for all 
 $g = \{ g_v\} \in GL(2, \A).$  
For any square $m \in (\Z/N\Z)^\times,$ there exist infinitely many pairwise relatively prime integers $n$ 
  such that $B_f^N(I_2, n)\ne 0$ and $r_N(p^k) =m.$
  (Here, $I_2$ denotes the $2\times 2$ identity matrix.)
\end{lemma}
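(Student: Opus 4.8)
The plan is to prove this using only Dirichlet's theorem on primes in arithmetic progressions together with the elementary description of unramified local Whittaker functions; no deep non-vanishing result (Rankin--Selberg, symmetric square, Sato--Tate) is needed, and the hypothesis that $m$ be a square is precisely what makes this work.

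First I would record the shape of $B_f^N(I_2,\cdot)$ on the positive integers. Taking $\gm=I_2$ in \eqref{e:B_f^N,B_N^N1}, one has for a positive integer $n$ that $B_f^N(I_2,n)=W_\infty\left(\bspm 1&\\&1\espm\right)\prod_{p\nmid N}W_p\left(\bspm n&\\&1\espm\right)$. For $p\nmid N$ the local representation is unramified, so $W_p$ is right $GL(2,\Z_p)$-invariant; writing $n=p^{v_p(n)}u$ with $u\in\Z_p^\times$, the Casselman--Shalika formula shows that $W_p\left(\bspm n&\\&1\espm\right)$ is a nonzero multiple of the Hecke eigenvalue $A_f(\infty,p^{v_p(n)})$. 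Moreover the archimedean factor $W_\infty\left(\bspm 1&\\&1\espm\right)$ equals $B_f(I_2,1)=A_f(\infty,1)=1$ (using that the local Whittaker newvectors are normalized by $W_p(I_2)=1$). Hence, for $n$ a positive integer, $B_f^N(I_2,n)\ne0$ if and only if $A_f(\infty,p^{v_p(n)})\ne0$ for every prime $p\mid n$ with $p\nmid N$. I would also record the Hecke recursion $A_f(\infty,p^2)=A_f(\infty,p)^2-\chi(p)$, valid for $p\nmid N$, noting that there $\chi(p)$ is a root of unity, in particular nonzero.

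Next, fix $g\in(\Z/N\Z)^\times$ with $g^2=m$, which exists exactly because $m$ is a square, and use Dirichlet's theorem to obtain infinitely many primes $p\equiv g\pmod N$, all coprime to $N$. Now split into two cases. If infinitely many of these primes $p$ satisfy $A_f(\infty,p)\ne0$, enumerate them $p_1<p_2<\cdots$ and set $n_j=p_{2j-1}p_{2j}$; then the $n_j$ are pairwise coprime, $r_N(n_j)=g\cdot g=m$, and $B_f^N(I_2,n_j)\ne0$ by the previous paragraph. Otherwise only finitely many of them satisfy $A_f(\infty,p)\ne0$, hence infinitely many satisfy $A_f(\infty,p)=0$; for each such $p$ the recursion gives $A_f(\infty,p^2)=-\chi(p)\ne0$, so enumerating these as $q_1<q_2<\cdots$ and setting $n_j=q_j^2$ again yields pairwise coprime positive integers with $r_N(n_j)=g^2=m$ and $B_f^N(I_2,n_j)\ne0$. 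In either case the lemma follows.

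The delicate point --- and the reason the squareness of $m$ cannot be dropped --- is the passage from the residue $g$ back to $m$: when $A_f(\infty,p)$ vanishes for all but finitely many primes in the progression $p\equiv g\pmod N$, which genuinely happens for an entire residue class whenever $f$ has complex multiplication, one is forced to replace $p$ by $p^2$, and this recovers the target value only because $m=g^2$. For a general, non-square element $m$ one cannot reach $m$ as $r_N$ of a prime power, or of a product of primes drawn from a single progression on which $A_f(\infty,\cdot)$ is under control, and a genuinely quantitative non-vanishing theorem for Hecke eigenvalues would be required instead. Beyond this I foresee no real difficulty: what remains is to quote the Casselman--Shalika formula and the normalizations of \cite{GHL}, and to carry out the routine bookkeeping with $r_N$.
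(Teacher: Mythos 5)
Your proof is correct, and it reaches the conclusion by a genuinely different route than the paper. The paper also starts from the Casselman--Shalika formula, but then analyzes the full vanishing pattern of $k\mapsto W_p\left(\bspm p^k&\\&1\espm\right)$: if this function vanishes at all, its zero set is an arithmetic progression $\{k\equiv -1 \pmod{m_p}\}$ for some $m_p>1$, and the paper picks exponents avoiding that progression, splitting the square $m$ into an odd-order component and a square in the $2$-part of $(\Z/N\Z)^\times$ and handling the two cases ($m_p=2$ versus $m_p=3$) by hand. You instead extract from Casselman--Shalika only the single consequence $A_f(\infty,p)=0\Rightarrow A_f(\infty,p^2)=-\alpha_p\beta_p\ne 0$ (your $-\chi(p)$), fix a square root $g$ of $m$, and run a clean dichotomy over the primes $p\equiv g\pmod N$: either infinitely many have nonvanishing eigenvalue, in which case you take products $p_{2j-1}p_{2j}$ of two such primes, or infinitely many have vanishing eigenvalue, in which case you take $q_j^2$. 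Your organization is simpler and avoids the cyclic-group decomposition entirely; the only structural difference in the output is that in your first case the integers $n_j$ are products of two distinct primes rather than prime powers (which the statement's garbled condition ``$r_N(p^k)=m$'' suggests the author had in mind), but pairwise coprimality and $r_N(n_j)=m$ are all that the application to condition \eqref{e:PropertyImposedOnASubgroupOfZNZToGetACharacter} requires, so nothing is lost. Both your proof and the paper's tacitly use that the archimedean factor $W_\infty\left(\bspm 1&\\&1\espm\right)$ is nonzero; since this is common to the two arguments it is not a gap attributable to you, though your justification of it via ``$A_f(\infty,1)=1$'' is stated more confidently than the lemma's hypotheses strictly warrant.
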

\begin{proof}
 First, the requirement that 
 $W_f(g) = \prod_v W_v(g_v)$ for all 
 $g = \{ g_v\} \in GL(2, \A)$
 implies that 
 the cuspidal automorphic representation $\pi$ 
 of $GL(2, \A)$ generated by 
 $f_{_{\text{adelic}}}$
 is irreducible.  (This is a consequence of
 the strong multiplicity one theorem,
lemma 3.1 of \cite{Langlands:1980} )
  By the tensor product theorem \cite{Flath:1979}, we have $\pi \cong \bigotimes_v' \pi_v,$
 with $\pi_\infty$ a $(\frak g, K_\infty)$-module and $\pi_p$ a representation of $GL(2, \Q_p).$   
 Further, if $p \nmid N,$ the local 
 representation
 $\pi_p$ is unramified and $W_p$ is the 
 unique $GL(2, \Z_p)$ fixed vector of the 
 Whittaker model of $\pi_p,$ which is normalized so that its value on $GL(2, \Z_p)$ is one.  
  By the Casselman-Shalika formula 
 \cite{C-S}, 
 we have  
 $$W_p\left( \bpm n&\\&1 \epm \right)
 = W_p\left( \bpm p^{v_p(n)}&\\&1 \epm \right) = \frac{\alpha_p^{v_p(n)+1} - \beta_p^{v_p(n)+1}}{\alpha_p-\beta_p},$$
 for some $\alpha_p, \beta_p \in \C, \, \alpha_p \ne \beta_p.$  
 It follows that if 
$\left\{ n\in \Z, n >0 \mid W_p\left( \bspm n&\\&1 \espm \right) =0\right\}$ is nonempty, then it is equal to $\{n \in \Z, n>0\mid v_p(n) \equiv -1 \pmod{m_p} \}$ for some integer $m_p >1.$  

Recall that any finite abelian group is isomorphic to a product of cyclic groups of prime power order.  First suppose that $m\in (\Z/N\Z)^\times$ is of odd order.  Write $\ord(m)$ for the order of $m,$ and let $p$ be a prime with $r_N(p)=m.$  Then $B_f^N( \infty, p)=0$ only if $m_p=2,$ in which case $B_f^N( \infty, p^{k\cdot \ord(m)+1})$ 
for any odd $k.$  Further, at least one of 
$B_f^N( \infty, p),\;B_f^N( \infty, p^{\ord(m)+1})$ is nonzero for any prime with $r_N(p)=m,$ of which there are infinitely many.

Now suppose that $m$ lies in a cyclic subgroup of $(\Z/N\Z)^\times$ of order $2^\ell,$ and that $m$ is a square.  Take $u$ with $\ord(u) =2^r, \; r \le \ell$ and $u^2=m.$  
Then for any prime $p$ such that $r_N(p) =u,$ either $B_f^N( I_2, p^2) \ne 0,$ or else $m_p = 3,$ in which case $B_f^N(I_2, p^{\ord(u)+2})$ is nonzero. 
Since every square in $(\Z/N\Z)^\times$ is the product of an element of odd order and 
a square in a cyclic group whose order is a power of two, this completes the proof.
\end{proof}

This yields an unconditional version of theorem \ref{Main theorem--precise version at theend}.  
\begin{theorem}\label{suffcond2--final}
Let $N$ be an integer,
and let $f$ be a Maass form of level 
$N$ such that 
$W_f(g) = \prod_v W_v(g_v)$ for all 
 $g = \{ g_v\} \in GL(2, \A).$
 Fix $\gm = \bspm a&b\\c&d \espm \in SL(2, \Z).$
If $B_f( \gm, \cdot)$ is multiplicative, 
then $N \mid 576 cd.$
\end{theorem}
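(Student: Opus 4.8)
The plan is to obtain theorem \ref{suffcond2--final} from theorem \ref{Main theorem--precise version at theend} by making the canonical choice $G = ((\Z/N\Z)^\times)^2$ for the subgroup appearing there, and then converting the resulting divisibility $N \mid M_1$ into $N \mid 576\,cd$ by a prime-by-prime computation.

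\emph{Verifying the hypotheses.} First I would observe that $B_f^N(\gm, n) = B_f^N(I_2, n)$ for every $n$: the only factors in $B_f^N$ besides the archimedean one are the $W_p$ with $p \nmid N$, and for such $p$ the matrix $\gm^{-1}$ lies in $GL(2,\Z_p)$, which fixes $W_p$ on the right (this is just case (2) of lemma \ref{lem:5conditionsForMultiplicativeAtv}). Hence $S = \{ n\in \Z : B_f^N(\gm, n)\ne 0\}$ and its image $\bar S$ in $(\Z/N\Z)^\times$ do not depend on $\gm$, and the preceding lemma applies verbatim: for every square $m \in (\Z/N\Z)^\times$ there are infinitely many pairwise coprime integers $n$, each prime to $N$, with $B_f^N(I_2, n) \ne 0$ and $r_N(n) = m$. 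In particular $G \subseteq \bar S$, and given $g_1 \in G$, $g_2 \in \bar S$ one picks any $\beta_2 \in S$ with $r_N(\beta_2) = g_2$ and then $\beta_1$ from the family attached to $g_1$ coprime to $\beta_2$, which is possible since all but finitely many members of that family avoid the finitely many prime divisors of the fixed integer $\beta_2$. So $G$ satisfies \eqref{e:PropertyImposedOnASubgroupOfZNZToGetACharacter} relative to $S$, and theorem \ref{Main theorem--precise version at theend} gives: if $B_f(\gm, \cdot)$ is multiplicative, then $N \mid M_1$, where $M$ is the largest divisor of $N$ dividing $(m-1)cd$ for every square $m$, and $M_1$ is derived from $M$ by proposition \ref{Gm1(M1)}.

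\emph{The arithmetic.} Decomposing $G = \prod_q ((\Z/q^{v_q(N)}\Z)^\times)^2$ by the Chinese remainder theorem, one reads off $v_q(M) = \min\bigl(v_q(N),\, v_q(cd) + e_q\bigr)$, where $e_q$ is the least value of $v_q(m-1)$ among squares $m \in (\Z/q^{v_q(N)}\Z)^\times$ with $m \not\equiv 1$ (and $e_q = \infty$ if there is no such $m$). The needed facts about square subgroups are: if $q > 3$ there is a square $m \not\equiv 1 \pmod q$, so $e_q = 0$; if $q = 3$ every square is $\equiv 1 \pmod 3$ while $4$ is a square, so $e_3 = 1$ when $v_3(N) \ge 2$ and $e_3 = \infty$ when $v_3(N) \le 1$; if $q = 2$ the squares modulo $2^{v_2(N)}$ are exactly the residues $\equiv 1 \pmod 8$, so $e_2 = 3$ (witnessed by $9$) when $v_2(N) \ge 4$ and $e_2 = \infty$ when $v_2(N) \le 3$. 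Substituting these into the four-way formula of proposition \ref{Gm1(M1)} and imposing $v_q(N) \le v_q(M_1)$ yields, after a short case check, $v_q(N) \le v_q(cd)$ for $q > 3$, $v_3(N) \le v_3(cd) + 2$, and $v_2(N) \le v_2(cd) + 6$, with the small cases $v_3(N) \le 1$ and $v_2(N) \le 3$ imposing no constraint at all. Multiplying out gives $N \mid 2^6 \cdot 3^2 \cdot cd = 576\,cd$.

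The bulk of the work — and the only place any care is required — is the last paragraph: correctly pinning down $e_q$ from the square subgroup of $(\Z/q^{v_q(N)}\Z)^\times$ and then tracking the inequality through the branches of proposition \ref{Gm1(M1)}, being especially careful at $q = 2$ with $v_2(N) \in \{1,2,3\}$ and at $q = 3$ with $v_3(N) = 1$, where the square subgroup is trivial and one must check that $N \mid M_1$ is automatic. Everything substantive has already been established in theorem \ref{Main theorem--precise version at theend} and in the lemma immediately preceding this one, so no new ideas enter.
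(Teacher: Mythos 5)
Your proof is correct and follows exactly the route the paper intends: take $G$ to be the subgroup of squares, use the lemma immediately preceding the theorem to verify condition \eqref{e:PropertyImposedOnASubgroupOfZNZToGetACharacter}, and apply theorem \ref{Main theorem--precise version at theend}. The paper leaves the prime-by-prime extraction of the constant implicit, and your computation of $e_q$ and the resulting bounds ($v_q(N)\le v_q(cd)$ for $q>3$, $v_3(N)\le v_3(cd)+2$, $v_2(N)\le v_2(cd)+6$) correctly recovers $576=2^6\cdot 3^2$.
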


\begin{remarks}\label{remarks at the end }
If $A_f(\infty, p) \ne 0$ for more than half of all 
primes $p \nmid N,$ then the group 
$(\Z/N\Z)^\times$ itself satisfies 
\eqref{e:PropertyImposedOnASubgroupOfZNZToGetACharacter}, and we are in the situation of theorem \ref{t:necCond2-simple}.  
According to Lemma 4.1 
of \cite{KLSW}, this condition is satisfied by  holomorphic forms of weight $\ge 2$  with trivial character which are not of CM type.
The same argument gives the same result any Maass form $f$ such that the second, fourth, and sixth symmetric power $L$ functions are all holomorphic. 
It is expected that this condition holds
unless the representation $\pi$ attached to $f$ satisfies one of the following
\begin{itemize}
\item  $\pi \cong \pi \otimes \eta$ for some character $\eta: \Q^\times \bs \A^\times \to \C^\times,$
\item $\pi$ contains the adelic lift of a weight one Maass form corresponding to a holomorphic form,
\item $\pi$ contains the adelic lift of a weight zero Maass form with Laplace eigenvalue $1/4.$  
\end{itemize}
(See \cite{Shahidi:1994} \S 7, \cite{Gelbart:1997},\S4.2.) 

On the other hand, if the first of the three conditions above {\em is} satisfied, then it is possible for $A_f(\infty ,p)$ to vanish at least half of the time.  For example, by a theorem of Gauss, the elliptic curve $y^2 = x^3 + 4x$ has exactly $p+1$ points on it whenever $p$ is congruent to $3 \pmod{4}.$ This implies that the associated cusp form of weight two satisfies $A_f(\infty,p) = 0$ for every prime congruent to $3 \pmod{4}.$
The level of this cusp form is $32$
(see \cite{cremona}, p.103). 
\end{remarks}


\end{document}